\newtheorem{property}{Property}
\newtheorem{lemma}{Lemma}
\newtheorem{theorem}{Theorem}
\newtheorem{proposition}{Proposition}
\newtheorem{corollary}{Corollary}
\DeclareMathOperator{\lcm}{lcm}
\begin{document}

\markboth{S. Han et al}
{The $(u,v)$-Calkin-Wilf Forest}

%
%

\title{THE $(u,v)$-CALKIN-WILF FOREST}

\author{Sandie Han, Ariane M. Masuda, Satyanand Singh, and Johann Thiel}








\maketitle


\begin{abstract}
In this paper we consider a refinement, due to Nathanson, of the Calkin-Wilf tree. In particular, we study the properties of such trees associated with the matrices $L_u=\begin{bmatrix} 1 & 0 \\ u & 1\end{bmatrix}$ and $R_v=\begin{bmatrix} 1 & v \\ 0& 1\end{bmatrix}$, where $u$ and $v$ are nonnegative integers. We extend several known results of the original Calkin-Wilf tree, including the symmetry, numerator-denominator, and successor formulas, to this new setting. Additionally, we study the ancestry of a rational number appearing in a generalized Calkin-Wilf tree.
\end{abstract}



\section{Introduction}\label{intro}

The Calkin-Wilf tree~\cite{CW} is an infinite binary tree generated by two rules. The number 1, represented as $1/1$, is the root of the tree and each vertex $a/b$ has two children: the left one is $a/(a+b)$ and the right one is
$(a+b)/b$ (see Figure~\ref{fig:CWtree}).

\begin{figure}[ht!]
\begin{center}
\begin{tikzpicture}[sibling distance=15pt]
\tikzset{level distance=30pt}
\Tree[.$1/1$ [.$1/2$ [.$1/3$ $1/4$ $4/3$ ]
   [.$3/2$ $3/5$ $5/2$ ] ] [.$2/1$ [.$2/3$  $2/5$ $5/3$ ] [.$3/1$ $3/4$ $4/1$ ] ]]
\end{tikzpicture}
\end{center}
\caption{The first four rows of the Calkin-Wilf tree.}\label{fig:CWtree}
\end{figure}
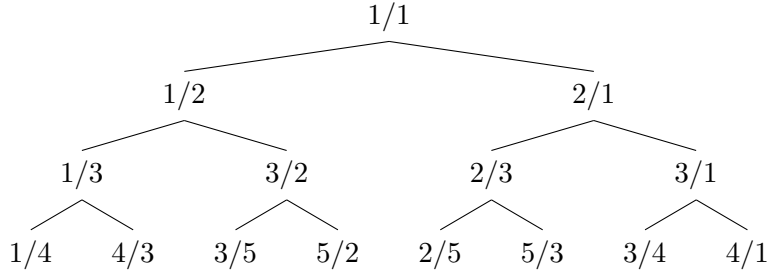

By following the breadth-first order, this tree provides an enumeration of positive rational numbers:
\[1, \dfrac{1}{2}, \dfrac{2}{1}, \dfrac{1}{3}, \dfrac{3}{2}, \dfrac{2}{3}, \dfrac{3}{1}, \dfrac{1}{4},  \dfrac{4}{3}
, \dfrac{3}{5}, \dfrac{5}{2}, \dfrac{2}{5}, \dfrac{5}{3} , \dfrac{3}{4} , \dfrac{4}{1},\ldots.\]
In fact, Calkin and Wilf~\cite{CW} showed that every reduced positive rational number appears in this list exactly once.

In addition to enumerating the positive rationals, the Calkin-Wilf tree has many interesting properties and generalizations that have been explored by various researchers (for example,~\cite{qCW, CW, DR, F, G, K, MS, N3, N2, N, SSS}). In particular, as in \cite{N3}, we highlight the following four properties. We denote by $c(n,i)$ the vertex in the $i^{\text{th}}$ position (from left to right) of the $n^\text{th}$ row\footnote{Our convention is that the row containing the root is the zero row. So, for example, $c(2,3)=2/3$.}.

\begin{property}
[Successor formula, Newman~\cite{N}] \label{sf}  For every nonnegative integer $n$ and $i=1,\ldots, 2^n-1$, we have
\begin{align}
c(n,i+1) &= \dfrac{1}{2[c(n,i)] + 1 - c(n,i)}\label{sfform}
\end{align}
where $[x]$ denotes the integer part of $x$.
\end{property}

\begin{property}
[Denominator-numerator formula, Calkin and Wilf~\cite{CW}] \label{dn} For every nonnegative integer $n$ and $i=1,\ldots, 2^n-1$, the denominator of $c(n,i)$ is equal to the numerator of $c(n,i+1)$.
\end{property}

\begin{property}
[Symmetry formula, \cite{N3}] \label{sn}  For every nonnegative integer $n$ and
$i=1,\ldots, 2^n$, we have
$c(n,i)\cdot c(n, 2^n-i+1) =1.$
\end{property}

\begin{property}
[Depth formula, \cite{GLB}] \label{df} Let $a/b$ be a positive reduced rational number. Let $n$ and $i$ be the unique pair such that $c(n,i)=a/b$ . If
\[
\frac{a}{b} =  a_0+\cfrac{1}{a_1+\ddots+\cfrac{1}{a_{k-1}+\cfrac{1}{a_k}}}
= [a_0,a_1,\ldots,a_{k-1},a_k]
\]
is the finite continued fraction representation\footnote{For a rational number not equal to 1, we always take the shorter continued fraction representation where $a_k\neq 1$.} of $a/b$, then \[n=a_0+a_1+\cdots+a_{k-1}+a_k-1.\]
In other words, the sum of the coefficients of the continued fraction representation encodes the row number where $a/b$ appears, i.e. the depth, in the Calkin-Wilf tree.

\end{property}

Let $z$ be a variable. In~\cite{N3}, Nathanson considers the infinite binary tree $\mathcal{T}(z)$, whose root is $z$, where each vertex $w$ has two children: the left child is $w/(w+1)$, and the right child is $w+1$ (see Figure~\ref{fig:Ntree}).

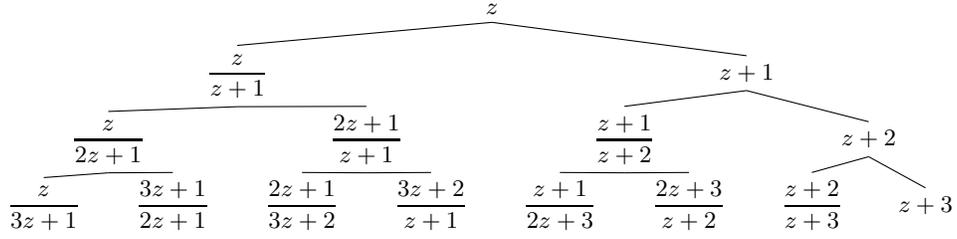
\begin{figure}[ht!]
\begin{center}
\begin{tikzpicture}[sibling distance=15pt]
\tikzset{level distance=25pt}
{\footnotesize{\Tree[.$z$ [.$\dfrac{z}{z+1}$ [.$\dfrac{z}{2z+1}$ $\dfrac{z}{3z+1}$ $\dfrac{3z+1}{2z+1}$ ]
   [.$\dfrac{2z+1}{z+1}$ $\dfrac{2z+1}{3z+2}$ $\dfrac{3z+2}{z+1}$ ] ] [.$z+1$ [.$\dfrac{z+1}{z+2}$  $\dfrac{z+1}{2z+3}$ $\dfrac{2z+3}{z+2}$ ]
    [.$z+2$ $\dfrac{z+2}{z+3}$ $z+3$ ] ]]}}
    \end{tikzpicture}
\end{center}
\caption{The first four rows of $\mathcal{T}(z)$.}\label{fig:Ntree}
\end{figure}

The original Calkin-Wilf tree is clearly the special case of $z=1$. For general $z$, Properties~\ref{sf}-\ref{df} of the Calkin-Wilf tree extend\footnote{Of independent interest, the generalization of Property~\ref{df} requires an appropriate definition of a continued fraction representation for linear fractional transformations.} to $\mathcal{T}(z)$.

We can associate each vertex in $\mathcal{T}(z)$ with a column vector as in Figure~\ref{fig:asso}.

\begin{figure}[ht!]
\begin{center}
\begin{tikzpicture}
\begin{scope}[xshift=-3cm,sibling distance=25pt]
\tikzset{level distance=40pt}
\Tree [.$z$ $\dfrac{z}{z+1}$ [.$z+1$  ] ]
\end{scope}
$\Longleftrightarrow$
\begin{scope}[xshift=3cm,sibling distance=25pt]
\tikzset{level distance=50pt}
\Tree [.${\begin{bmatrix} z\\1 \end{bmatrix}}$ ${\begin{bmatrix} z\\z+1 \end{bmatrix}}$ [.${\begin{bmatrix} z+1\\1 \end{bmatrix}}$  ] ]
\end{scope}
\end{tikzpicture}
\end{center}
\caption{Association between rational numbers and vectors.}\label{fig:asso}
\end{figure}
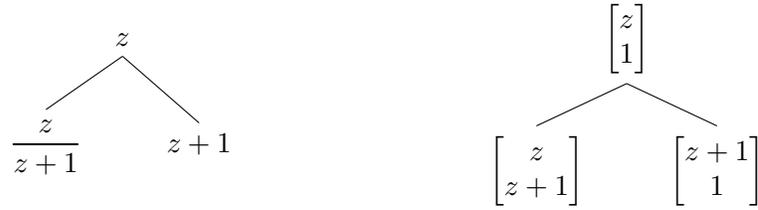

Letting $L_1:=\begin{bmatrix} 1 & 0\\ 1 & 1 \end{bmatrix}$ and $R_1:=\begin{bmatrix} 1 & 1\\ 0 & 1 \end{bmatrix}$, we see that subsequent vertices in $\mathcal{T}(z)$ can be obtained by matrix multiplication.  A vertex $\begin{bmatrix} a \\ b \end{bmatrix}$
has left child
\begin{align}
L_1\cdot\begin{bmatrix} a \\ b \end{bmatrix} = \begin{bmatrix} 1 & 0\\ 1 & 1 \end{bmatrix}\cdot\begin{bmatrix} a\\ b \end{bmatrix}
= \begin{bmatrix} a \\ a+b \end{bmatrix}\label{l1}
\end{align}
and right child
\begin{align}
R_1\cdot\begin{bmatrix} a \\ b \end{bmatrix} = \begin{bmatrix} 1 & 1\\ 0 & 1 \end{bmatrix}\cdot\begin{bmatrix} a \\ b \end{bmatrix}
= \begin{bmatrix} a+b \\ b \end{bmatrix}.\label{r1}
\end{align}
In particular, every vertex in $\mathcal{T}(z)$ is obtained by multiplying a matrix generated freely by the set $\{L_1, R_1\}$ with the vector associated with $z$. In this way, we can label the vertices of $\mathcal T(z)$ with matrices in $$SL_2(\mathbb{N}_0)=\bigg\{\begin{bmatrix} a & b\\ c & d \end{bmatrix}:a,b,c,d\in\mathbb{N}_0\text{ and }ad-bc=1\bigg\}$$ acting on $z$ (see Figure~\ref{fig:N2tree}). For ease of notation, we denote the left child and the right child of $w$ by $L_1(w)$ and $R_1(w)$, respectively.

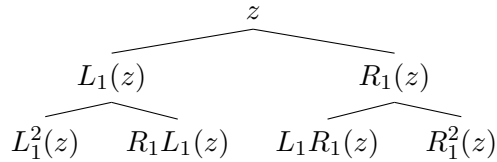
\begin{figure}[ht!]
\begin{center}
\begin{tikzpicture}[sibling distance=10pt]
\tikzset{level distance=25pt}
 \Tree [.$z$  [.$L_1(z)$  [.$L_1^2(z)$ ] [.$R_1L_1(z)$ ] ]
                [.$R_1(z)$  [.$L_1R_1(z)$   ]
                    [.$R_1^2(z)$    ]]]
 \end{tikzpicture}
 \end{center}
 \caption{The first three rows of $\mathcal{T}(z)$ in terms of $L_1$ and $R_1$.}\label{fig:N2tree}
\end{figure}

With this perspective in mind, it is natural to consider an analogous infinite binary tree generated by other pairs of matrices in $SL_2(\mathbb N_0)$. Let $u$ and $v$ be integers such that $u,v\ge 2$,
\[L_u:=\begin{bmatrix} 1 & 0 \\ u & 1\end{bmatrix} \qquad\text{ and }\qquad R_v:=\begin{bmatrix} 1 & v \\ 0 & 1\end{bmatrix}.\]
Nathanson \cite{M,N3} proposed to investigate the infinite binary tree associated to $\{L_u, R_v\}$ obtained by replacing $L_1$ in \eqref{l1} and $R_1$ in \eqref{r1} by $L_u$ and $R_v$, respectively (see Figure~\ref{fig:newCWrule} for the generation rule).

\begin{figure}[ht!]
\begin{center}
\begin{tikzpicture}
\begin{scope}[xshift=3cm,sibling distance=25pt]
\tikzset{level distance=40pt}
\Tree [.$w$ $\dfrac{w}{uw+1}$ [.$w+v$  ] ]
\end{scope}
\end{tikzpicture}
\end{center}
\caption{The children of $w$ in  $\mathcal{T}^{(u,v)}(z)$.}\label{fig:newCWrule}
\end{figure}
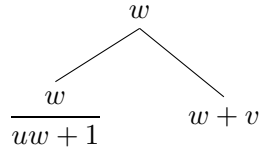

We refer to this generalization as a {\emph{$(u,v)$-Calkin-Wilf tree}} and denote it by $\mathcal{T}^{\;(u,v)}(z)$, where $z$ is the root (see Figure~\ref{fig:uvgraph}). Note that by setting $u=v=1$ and $z=1$, we obtain the original Calkin-Wilf tree, $\mathcal{T}(1)$. From now on, we assume that $u$ and $v$ are integers such that $u,v\ge 1$, and so $\mathcal{T}^{\;(1,1)}(1)$ is $\mathcal{T}(1)$.

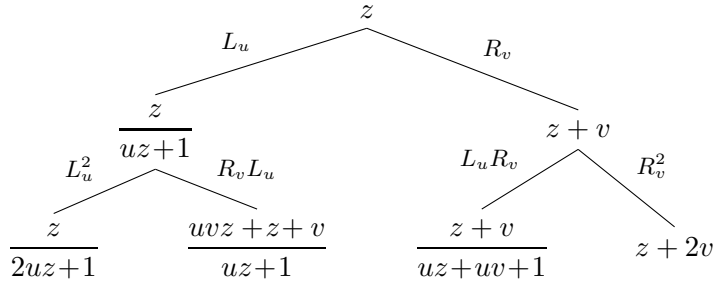
\begin{figure}[ht!]
\begin{center}
\begin{tikzpicture}[sibling distance=25pt]
\tikzset{level distance=45pt}
\Tree[.$z$ \edge node[auto=right] {\footnotesize{$L_u$}}; [.$\dfrac{z}{uz\!+\!1}$ \edge node[auto=right] {\footnotesize{$L^2_u$}}; [.$\dfrac{z}{2uz\!+\!1}$ ] \edge node[auto=left] {\footnotesize{$R_vL_u$}};
[.$\dfrac{uvz+\!z\!+v}{uz\!+\!1}$ ] ] \edge node[auto=left] {\footnotesize{$R_v$}}; [.$z+v$ \edge node[auto=right] {\footnotesize{$L_uR_v$}}; [.$\dfrac{z+v}{uz\!+\!uv\!+\!1}$ ] \edge node[auto=left] {\footnotesize{$R^2_v$}}; [.$z+2v$ ] ] ]
\end{tikzpicture}
\end{center}
\caption{The first three rows of $\mathcal T^{\;(u,v)}(z)$.}\label{fig:uvgraph}
\end{figure}

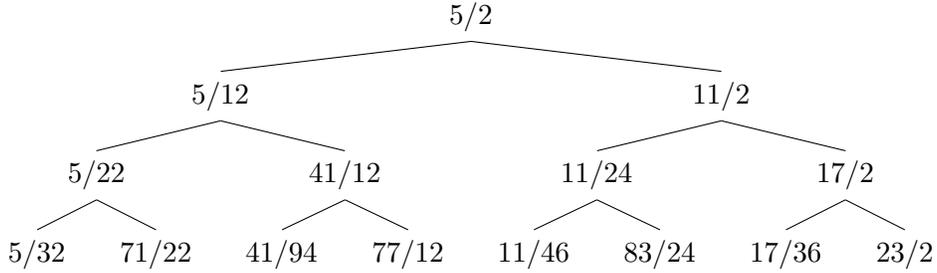
\begin{figure}[ht!]
\begin{center}
\begin{tikzpicture}[sibling distance=13pt]
\tikzset{level distance=30pt}
\Tree[.$5/2$ [.$5/12$ [.$5/22$ $5/32$ $71/22$ ]
   [.$41/12$ $41/94$ $77/12$ ] ] [.$11/2$ [.$11/24$ $11/46$ $83/24$ ] [.$17/2$ $17/36$ $23/2$ ]  ] ]
\end{tikzpicture}
\end{center}
\caption{The first four rows of $\mathcal T^{\;(2,3)}(5/2)$.}\label{fig:u2v3}
\end{figure}

As an example, consider the tree $\mathcal T^{\;(2,3)}(5/2)$ (see Figure~\ref{fig:u2v3}). One can immediately notice that the denominator-numerator and the symmetry formulas (Properties~\ref{dn} and~\ref{sn}) do not hold in $\mathcal T^{\;(2,3)}(5/2)$. Furthermore, many rational numbers appearing in $\mathcal{T}(1)$ seem to be missing in $\mathcal T^{\;(2,3)}(5/2)$. In fact, it is not too difficult to show that 1 does not appear in any tree $\mathcal T^{\;(u,v)}(z)$ unless $z=1$. In the next section we will address this issue, and define the $(u,v)$-Calkin-Wilf forest which will enumerate positive rational numbers.

We have already shown by example that Properties~\ref{sf}-\ref{df} do not, in general, hold for a $(u,v)$-Calkin-Wilf tree. However, $(u,v)$-Calkin-Wilf trees share enough of a similar structure with $\mathcal{T}(1)$ that we are able to provide some appropriate, universal generalizations (see Theorem~\ref{gensym} and Corollary~\ref{cfdepth}, for example). In other cases, we will show that some of Properties~\ref{sf}-\ref{df} completely characterize the Calkin-Wilf tree (see Proposition~\ref{denufor} and Corollary~\ref{symfor}, for example).


\section{Global Properties}

For a fixed $u$ and $v$, consider the set of all positive reduced rational numbers that are not the children of any rational number appearing in \emph{any} $(u,v)$-Calkin-Wilf tree. We refer to such numbers as $(u,v)$-orphans (when the context is clear, we may refer to such numbers simply as orphans). A straightforward proof shows that the set of $(u,v)$-orphans is
\begin{equation*}\left\{\dfrac{a}{b}\colon \dfrac{1}{u}\le\dfrac{a}{b}\le v\right\}\end{equation*}
(see~\cite{N3}). It follows that the set of $(u,v)$-orphans is finite if and only if $u=v=1$. Furthermore, it can be seen that every left child in a $(u,v)$-Calkin-Wilf tree is strictly bounded above by $1/u$ and every right child is strictly bounded below by $v$. In the case of the original Calkin-Wilf tree $1$ is the only orphan. In $\mathcal T^{\;(2,3)}(5/2)$, the vertex $5/2$ satisfies the condition $1/2 \le 5/2 \le 3$, and so it is one of the many $(2,3)$-orphans.

\begin{lemma}\label{motlemma1}
Let $z$ and $z'$ be distinct $(u,v)$-orphans. Then the vertices of $\mathcal{T}^{(u,v)}(z)$ and $\mathcal{T}^{(u,v)}(z')$ form disjoint sets.
\end{lemma}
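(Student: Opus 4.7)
The plan is to show that every non-orphan vertex in any $(u,v)$-Calkin-Wilf tree has a parent which is uniquely determined by the vertex itself, independent of the tree it sits in. Once this is established, the ancestry chain of any rational number is intrinsic to the number, and so two trees sharing a vertex must share a root.

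First I would pin down the parent-recovery procedure. For any $w > 0$, the left child $L_u(w) = w/(uw + 1)$ is strictly less than $1/u$ (since $uw + 1 > uw$), while the right child $R_v(w) = w + v$ is strictly greater than $v$. Since $v \ge 1 \ge 1/u$, the half-lines $(0, 1/u)$ and $(v, \infty)$ are disjoint, and they contain exactly the non-orphans. Hence a non-orphan vertex $x$ must be a left child when $x < 1/u$ and a right child when $x > v$, and in the two cases the parent is forced to be
\[
w \;=\; \frac{x}{1 - ux} \qquad\text{or}\qquad w \;=\; x - v,
\]
respectively. Either formula makes sense and produces a positive rational in the corresponding range.

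Next I would iterate: starting from any vertex $x$, the sequence $x = x_0, x_1, x_2, \ldots$, where $x_{k+1}$ is the parent of $x_k$ computed from the rules above, is determined by $x$ alone. Because each step moves strictly up in the tree, the sequence must terminate, and it terminates exactly when $x_k$ is a $(u,v)$-orphan (the only vertex without a parent).

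Finally, suppose for contradiction that some $x$ lies in both $\mathcal{T}^{(u,v)}(z)$ and $\mathcal{T}^{(u,v)}(z')$. In each tree, iterating the parent map starting from $x$ must end at the root. But the whole sequence is intrinsic to $x$, so it terminates at a unique orphan, forcing $z = z'$, a contradiction. The only point that needs care — and the closest thing to an obstacle — is verifying that the inequalities $L_u(w) < 1/u$ and $R_v(w) > v$ are strict and that the two non-orphan regions really are disjoint; both follow immediately from $u, v \ge 1$ and $w > 0$.
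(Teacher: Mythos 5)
Your proof is correct and rests on the same two facts as the paper's: left children lie strictly below $1/u$, right children lie strictly above $v$ (so the two ranges are disjoint and exhaust the non-orphans), and since $L_u$ and $R_v$ are invertible the parent of any non-orphan is uniquely recoverable from the vertex alone. The paper packages this as a minimal-common-vertex contradiction rather than tracing the full intrinsic ancestry chain down to its orphan root, but the two arguments are essentially the same.
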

\begin{proof}
Suppose that $w$ is a rational number that appears as a vertex in both $\mathcal{T}^{(u,v)}(z)$ and $\mathcal{T}^{(u,v)}(z')$. Without loss of generality, we can assume that $w$ is such that no other ancestor of it (in either tree) holds this property. It follows $w$ is not a root and must be the child of vertices in both trees. Furthermore, $w$ cannot be a left child (right child, resp.) in both $\mathcal{T}^{(u,v)}(z)$ and $\mathcal{T}^{(u,v)}(z')$. So $w$ is a left child in, say, $\mathcal{T}^{(u,v)}(z)$ and a right child in $\mathcal{T}^{(u,v)}(z')$. This implies that $w<1/u\leq 1$ and $w>v\geq 1$, a contradiction.
\end{proof}

Since every positive reduced rational number is either a $(u,v)$-orphan or the descendant of a $(u,v)$-orphan, Lemma~\ref{motlemma1} shows that the set of $(u,v)$-orphans enumerates a forest of trees that partitions the set of positive rational numbers, the $(u,v)$-Calkin-Wilf forest.

\begin{lemma}\label{motlemma2}
Let $u$ and $v$ be positive integers.  Then $L_u=L_1^u$ and $R_v=R_1^v$.
\end{lemma}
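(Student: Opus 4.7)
The plan is to prove both identities by a straightforward induction on the exponent, since each is a statement about powers of a specific $2\times 2$ matrix.

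First I would handle $L_u = L_1^u$ by induction on $u$. The base case $u=1$ is immediate from the definitions of $L_u$ and $L_1$. For the inductive step, assuming $L_1^k = \begin{bmatrix} 1 & 0 \\ k & 1 \end{bmatrix}$, I would compute
\[
L_1^{k+1} = L_1 \cdot L_1^k = \begin{bmatrix} 1 & 0 \\ 1 & 1 \end{bmatrix}\begin{bmatrix} 1 & 0 \\ k & 1 \end{bmatrix} = \begin{bmatrix} 1 & 0 \\ k+1 & 1 \end{bmatrix},
\]
which matches the definition of $L_{k+1}$.

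The proof for $R_v = R_1^v$ is entirely analogous: the base case is direct, and in the inductive step the product $R_1 \cdot R_1^k$ simply adds one to the upper-right entry. Alternatively, one could observe that $R_v = L_u^T$ after swapping roles of $u$ and $v$, so the second identity follows from the first by transposing.

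There is no real obstacle here — the result is a routine two-line matrix computation and the induction is as simple as possible. The only thing worth emphasizing in the write-up is that this lemma is the key observation which will let one reinterpret the $(u,v)$-Calkin-Wilf tree moves as iterated applications of the original Calkin-Wilf moves $L_1$ and $R_1$, and so should be stated cleanly for later reference.
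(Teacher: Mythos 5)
Your proof is correct and follows essentially the same route as the paper: induction on the exponent with a one-line matrix multiplication for the inductive step (the paper multiplies by $L_1$ on the right rather than the left, which is immaterial here). No issues.
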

\begin{proof}
We show that $L_u=L_1^u$ by induction on $u$.  This is clearly true when $u=1$.  Suppose it is true for $u\geq 1$.  Then
\[L_1^{u+1} = L_1^u\cdot L_1 = L_u\cdot L_1  =
\begin{bmatrix}
1 & 0\\
u  & 1\\
\end{bmatrix}\cdot
\begin{bmatrix}
1 & 0\\
1  & 1\\
\end{bmatrix} =
\begin{bmatrix}
1 & 0\\
u+1  & 1\\
\end{bmatrix}=
L_{u+1}. \]
A similar argument shows that $R_v=R_1^v$.
\end{proof}

When comparing $\mathcal{T}^{(u,v)}(z)$ to $\mathcal{T}(1)$, one can see from Lemma~\ref{motlemma2} that the vertices of $\mathcal{T}^{(u,v)}(z)$ can be obtained by starting with the vertex $z$ in $\mathcal{T}(1)$ and skipping over $u-1$ generations of left children or $v-1$ generations of right children to arrive at the children of $z$ in $\mathcal{T}^{(u,v)}(z)$. For example, compare Figure~\ref{fig:N2tree} and Figure~\ref{fig:matrix23tree} in the case where $u=2$ and $v=3$. In other words, the vertex set of $\mathcal{T}^{(u,v)}(z)$ is a submonoid  of $\mathbb Q$ or, equivalently, the vertex set of $\mathcal{T}(1)$. More generally, we have the following two results as other immediate consequences of Lemma~\ref{motlemma2}.

\begin{proposition}
The vertex set of $\mathcal T^{(u,v)}(z)$ is a submonoid of the vertex set of $\mathcal T^{(u',v')}(z')$ if and only if $z\in \mathcal T^{(u',v')}(z')$, $u'\mid u$, and $v'\mid v$.
\end{proposition}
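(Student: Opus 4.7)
The plan is to prove both implications via Lemma~\ref{motlemma2}, passing between $\langle L_u, R_v\rangle$ and $\langle L_{u'}, R_{v'}\rangle$ inside $\langle L_1, R_1\rangle$, and then extracting the divisibility $u'\mid u$ and $v'\mid v$ from a block-structure argument on expanded words.

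For the ``if'' direction, assume $u = ku'$, $v = \ell v'$, and $z = N z'$ for some $N \in \langle L_{u'}, R_{v'}\rangle$. Lemma~\ref{motlemma2} yields $L_u = L_{u'}^k$ and $R_v = R_{v'}^\ell$, so $\langle L_u, R_v\rangle \subseteq \langle L_{u'}, R_{v'}\rangle$, and every vertex $Mz = (MN)z'$ of $\mathcal T^{(u,v)}(z)$ lies in $\mathcal T^{(u',v')}(z')$.

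For the ``only if'' direction, first set $M = I$ to obtain $z \in \mathcal T^{(u',v')}(z')$ and write $z = N z'$ with $N \in \langle L_{u'}, R_{v'}\rangle$. Applying the hypothesis to the left child $L_u z$ produces $N_L \in \langle L_{u'}, R_{v'}\rangle$ with $L_u z = N_L z'$. Using Lemma~\ref{motlemma2} we view $L_1^u N$ and $N_L$ both inside $\langle L_1, R_1\rangle$, and the equality $(L_1^u N)z' = N_L z'$, together with the freeness of the $\langle L_1, R_1\rangle$-action on positive rationals (the leading matrix-product letter sends the input into $(0,1)$ or $(1,\infty)$, so letters can be stripped off inductively), forces $L_1^u N = N_L$ as matrices, hence $\widehat{N_L} = L_1^u\,\widehat{N}$ as words in $\{L_1, R_1\}$, where $\widehat{\,\cdot\,}$ denotes expansion. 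Since $\widehat{N_L}$ expands an element of $\langle L_{u'}, R_{v'}\rangle$, every maximal $L_1$-run in it has length divisible by $u'$; writing the leading $L_1$-run of $\widehat{N}$ as $L_1^{au'}$ (with $a = 0$ in the case that $\widehat{N}$ is empty or begins with $R_1$), the leading $L_1$-run of $\widehat{N_L}$ has length $u + au'$, and divisibility by $u'$ then forces $u'\mid u$. The symmetric argument applied to the right child $R_v z$ forces $v'\mid v$.

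The main obstacle is the leading-block analysis in the reverse direction: one must verify that regardless of the first letter of $\widehat{N}$, prepending $L_1^u$ can preserve the $(u',v')$-block structure of $\widehat{N_L}$ only when $u' \mid u$. Once this is in hand (together with the routine freeness fact used to upgrade the rational equality $(L_1^u N)z' = N_L z'$ to a matrix equality), the rest is bookkeeping.
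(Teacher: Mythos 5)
Your proof is correct and follows the route the paper intends: the paper gives no proof at all, stating only that the proposition is an ``immediate consequence'' of Lemma~\ref{motlemma2}, and your argument is precisely that consequence worked out in detail via the expansion $L_u=L_1^u$, $R_v=R_1^v$. The freeness of the $\langle L_1,R_1\rangle$-action and the leading-run divisibility analysis that you supply for the ``only if'' direction are exactly the details the paper elides, and they hold as you describe.
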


\begin{proposition}
Let $U$ and $V$ be finite sets of nonnegative integers. Set $u :=\lcm\{u'\colon u'\in U\}$ and $v :=\lcm\{v'\colon v'\in V\}$.  Then \[\mathcal T^{(u,v)}(z) = \displaystyle\bigcap_{u'\in U, v'\in V} \mathcal T^{(u',v')}(z).\]
\end{proposition}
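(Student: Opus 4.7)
The plan is to use Lemma~\ref{motlemma2} to express each tree as the orbit of a submonoid of $SL_2(\mathbb N_0)$ acting on $z$, and then to prove the corresponding equality at the level of these submonoids.

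Let $\mathcal M_{u,v}$ denote the submonoid of $SL_2(\mathbb N_0)$ generated by $L_1^u$ and $R_1^v$. By Lemma~\ref{motlemma2}, the vertex set of $\mathcal T^{(u,v)}(z)$ is exactly $\{Mz : M\in\mathcal M_{u,v}\}$. Moreover, the action of $SL_2(\mathbb N_0)$ on a positive rational $z$ by linear fractional transformations is free: a non-identity fixed-point equation $Mz=z$ with entries $a,b,c,d$ of $M$ reduces to $cz^2+(d-a)z-b=0$ and forces $(a+d)^2-4$ to be a perfect square, which the factorization $(a+d-k)(a+d+k)=4$ rules out in positive integers. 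Consequently the proposition is equivalent to the submonoid equality $\mathcal M_{u,v}=\bigcap_{u'\in U,\,v'\in V}\mathcal M_{u',v'}$.

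To prove this, I use the well-known fact that the monoid $\langle L_1,R_1\rangle\subseteq SL_2(\mathbb N_0)$ is free on $\{L_1,R_1\}$: every element admits a unique factorization as an alternating word $L_1^{a_0}R_1^{b_1}L_1^{a_1}R_1^{b_2}\cdots$ with positive interior exponents. In this language, $M\in\mathcal M_{u',v'}$ iff every $L_1$-exponent in the factorization is divisible by $u'$ and every $R_1$-exponent is divisible by $v'$. Hence $M$ lies in every $\mathcal M_{u',v'}$ iff each $L_1$-exponent is divisible by every $u'\in U$---equivalently, by $u=\lcm(U)$---and each $R_1$-exponent by $v=\lcm(V)$, which is precisely the condition $M\in\mathcal M_{u,v}$.

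The main obstacle is the word-level description of $\mathcal M_{u',v'}$ in the previous paragraph, which rests on the freeness of $\langle L_1,R_1\rangle$. This is a standard fact, provable by inducting on the sum of matrix entries and peeling off a leftmost $L_1$ or $R_1$, or by appealing to the uniqueness of the Calkin-Wilf enumeration. The rest is a direct unwinding of definitions.
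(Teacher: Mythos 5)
The paper itself offers no written proof here---it presents the proposition as an ``immediate consequence'' of Lemma~\ref{motlemma2}---and your free-monoid argument is a sensible way to make that precise. The normal-form characterization of the submonoid $\mathcal M_{u',v'}$ generated by $L_1^{u'}$ and $R_1^{v'}$ (every $L_1$-block exponent divisible by $u'$, every $R_1$-block exponent divisible by $v'$), combined with the fact that divisibility by each element of $U$ is equivalent to divisibility by $\lcm(U)$, correctly gives $\mathcal M_{u,v}=\bigcap_{u',v'}\mathcal M_{u',v'}$; and the inclusion $\mathcal T^{(u,v)}(z)\subseteq\bigcap\mathcal T^{(u',v')}(z)$ follows from this with no further input.

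The reverse inclusion, however, requires the orbit map $M\mapsto Mz$ to be injective on $SL_2(\mathbb{N}_0)$: a vertex lying in every tree gives you one matrix $M_{u',v'}$ for each pair, and you must know these all coincide before you can conclude that the common matrix lies in $\bigcap\mathcal M_{u',v'}$. Your justification of this step has a gap. Proving that no non-identity $M\in SL_2(\mathbb{N}_0)$ fixes $z$ is not the same as proving $Mz=M'z\Rightarrow M=M'$, because a monoid is not closed under inversion, so you cannot pass to $(M')^{-1}M$ and remain in $SL_2(\mathbb{N}_0)$; and in $SL_2(\mathbb{Z})$ the discriminant argument genuinely fails, since parabolic elements such as $\begin{bmatrix}2&-1\\1&0\end{bmatrix}$ satisfy $(a+d)^2-4=0$ and fix the positive rational $z=1$. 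The injectivity you need is still true, but the clean way to get it is the left-child/right-child dichotomy already used in the proof of Lemma~\ref{motlemma1}: in $\mathcal T(z)$ every left child is $<1$ and every right child is $>1$, so if two words in $L_1,R_1$ agree at $z$, one cancels common outermost letters (each of $L_1,R_1$ is an injective map) until either the outermost letters differ---impossible, as one value would be $<1$ and the other $>1$---or one word is exhausted, leaving $V(z)=z$ for a nonempty word $V\in SL_2(\mathbb{N}_0)$, which is exactly the case your fixed-point computation does rule out. With injectivity established this way, the rest of your argument is sound and completes what the paper leaves implicit.
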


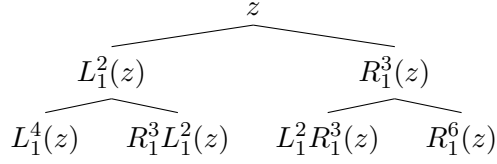
\begin{figure}
\begin{center}
\begin{tikzpicture}[sibling distance=10pt]
\tikzset{level distance=25pt}
 \Tree [.$z$  [.$L_1^2(z)$  [.$L_1^4(z)$ ] [.$R_1^3L_1^2(z)$ ] ]
                [.$R_1^3(z)$  [.$L_1^2R_1^3(z)$   ]
                    [.$R_1^{6}(z)$  ]]]
 \end{tikzpicture}
    \caption{The first three rows of $\mathcal{T}^{(2,3)}(z)$ in terms of $L_1$ and $R_1$.}\label{fig:matrix23tree}
 \end{center}
 \end{figure}

Lemma~\ref{motlemma1} and Lemma~\ref{motlemma2} show that the $(u,v)$-orphans partition the set of positive rational numbers into a collection of trees with a similar structure \emph{derived} from $\mathcal{T}(1)$. This idea serves as the main motivation for this paper.


\section{The Successor and the Numerator-Denominator Formulas}

 We begin this section by establishing some immediate properties of $(u,v)$-Calkin-Wilf trees related to Properties~\ref{sf}-\ref{dn} . We denote by $c_z^{(u,v)}(n,i)$ the $i^{\text{th}}$ element, from left to right, in the $n^{\text{th}}$ row of the $(u,v)$-Calkin-Wilf tree whose root is $z$. For any integer $n\ge 0$, the first and the last elements of the $n^{\text{th}}$ row with root $z$ are readily seen to be
 \[c_{z}^{(u,v)}(n,1) = \dfrac{z}{nuz+1}\quad \text{ and }\quad c_{z}^{(u,v)}(n,2^n) = z+nv,\]
respectively.  Furthermore, since $z$ is assumed to be in reduced form, then all vertices of $\mathcal{T}^{\;(u,v)}(z)$ are also in reduced form.

\begin{proposition}[Generalized successor formula]\label{sfuv}
Consider the $(u,v)$-Calkin-Wilf tree with root $z$. For every nonnegative integer $n$ and $i=1,\dots,2^n-1$, let $\alpha_i=c_{z}^{(u,v)}(n,i)$.  Then we have
\begin{align}
\alpha_{i+1} &=\frac{v\{\alpha_i\}+v^2(1-u\{\alpha_i\})}{u\lbrack \alpha_i\rbrack\big(\{\alpha_i\}+v(1-u\{\alpha_i\})\big)+v(1-u\{\alpha_i\})}\label{sfuvform}
\end{align}
where $\lbrack x\rbrack$ and $\{x\}$ denote the integer and fractional parts of the real number $x$, respectively.
\end{proposition}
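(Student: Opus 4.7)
My plan is to use the tree structure to express $\alpha_i$ and $\alpha_{i+1}$ simultaneously in terms of a common auxiliary vertex $w$, then invert to recover $w$ from $\alpha_i$ and substitute.

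The combinatorial key is a standard fact about positions in a complete binary tree. For $1 \le i \le 2^n - 1$, let $k \ge 0$ denote the $2$-adic valuation of $i$. Then the paths from the root to $\alpha_i$ and to $\alpha_{i+1}$ share a common prefix, ending at their lowest common ancestor $w$; the path from $w$ to $\alpha_i$ reads $L\,R^k$, while the path from $w$ to $\alpha_{i+1}$ reads $R\,L^k$. (This reflects the fact that incrementing the binary representation of $i-1$ flips a trailing block $0\,1^k$ into $1\,0^k$.) Translating into the matrix action of Figure~\ref{fig:uvgraph}, I obtain
\[
\alpha_i = R_v^k L_u(w) \qquad\text{and}\qquad \alpha_{i+1} = L_u^k R_v(w).
\]

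Next I read off the integer and fractional parts of $\alpha_i$. A direct induction (compatible with Lemma~\ref{motlemma2}) shows $L_u^k(x) = x/(kux+1)$ and $R_v^k(x) = x + kv$, so $L_u(w) = w/(uw+1) \in (0, 1/u) \subseteq (0, 1)$ and the representation $\alpha_i = L_u(w) + kv$ yields
\[
[\alpha_i] = kv \qquad\text{and}\qquad \{\alpha_i\} = \frac{w}{uw+1}.
\]
Solving these two equations for the hidden parameters gives $k = [\alpha_i]/v$ and $w = \{\alpha_i\}/(1 - u\{\alpha_i\})$. Substituting into
\[
\alpha_{i+1} = L_u^k(w+v) = \frac{w+v}{ku(w+v)+1}
\]
and clearing denominators is then a routine algebraic simplification that reproduces \eqref{sfuvform}.

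The main obstacle is the first step: recognizing that consecutive vertices of a given row admit the simultaneous description $R_v^k L_u(w)$ and $L_u^k R_v(w)$ at a shared ancestor. Once this structural fact is in place, the reason the successor formula depends only on $\alpha_i$ (and not on the apparently hidden parameters $w$ and $k$) becomes transparent: both $w$ and $k$ are completely recovered from the integer and fractional parts of $\alpha_i$, thanks to the fact that $L_u(w)$ automatically lies in the unit interval.
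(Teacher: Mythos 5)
Your proof is correct and follows essentially the same route as the paper: identify the lowest common ancestor $w$ of $\alpha_i$ and $\alpha_{i+1}$, write $\alpha_i = R_v^k L_u(w)$ and $\alpha_{i+1} = L_u^k R_v(w)$, recover $k = [\alpha_i]/v$ and $w = \{\alpha_i\}/(1-u\{\alpha_i\})$ from the fact that $L_u(w) \in (0,1)$, and substitute. The only cosmetic difference is that you make the value of $k$ explicit as the $2$-adic valuation of $i$, which the paper leaves implicit.
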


\begin{proof}
Our proof is a generalization of an argument by Newman~\cite{AZ} in the case where $u=v=1$.

If $\alpha_i$ and $\alpha_{i+1}$ are adjacent siblings in a $(u,v)$-Calkin-Wilf tree, then they share a common ancestor $w$ (see Figure \ref{fig:suc}) such that, for some $k\geq 0$, $\alpha_i$ is the $k^{\text{th}}$ right child of the left child of $w$ and $\alpha_{i+1}$ is the $k^{\text{th}}$ left child of the right child of $w$. (This is not a feature that is unique to $(u,v)$-Calkin-Wilf trees; it is common to all full binary trees.) It follows that $\alpha_i=\frac{w}{uw+1}+kv$ and $\alpha_{i+1}=\frac{w+v}{ku(w+v)+1}$. Note that since $\frac{w}{uw+1}<1$, then $\{\alpha_i\}=\frac{w}{uw+1}$ and $[\alpha_i]=kv$.

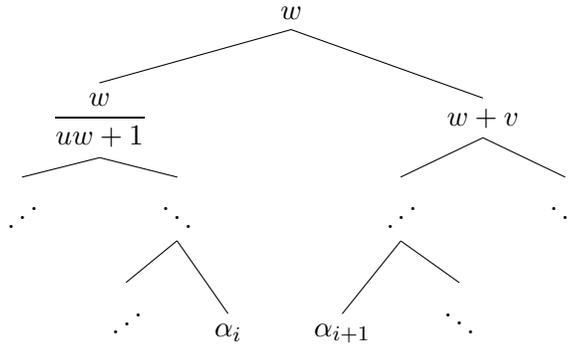
\begin{figure}[ht!]
\begin{center}
\begin{tikzpicture}[sibling distance=20pt]
\tikzset{level distance=40pt}
\Tree [.$w$ [.$\dfrac{w}{uw+1}$ $\iddots$  [.$\ddots$ $\iddots$ $\alpha_i$ ] ]
            [.$w+v$ [.$\iddots$ $\alpha_{i+1}$ $\ddots$ ] $\ddots$ ] ]
\end{tikzpicture}
\end{center}
\caption{Successors in a $(u,v)$-Calkin-Wilf tree with common ancestor $w$.}
\label{fig:suc}
\end{figure}

In order to complete the proof, we must eliminate the dependence of $\alpha_{i+1}$ on $k$ and $w$. This can be accomplished by taking the formula for $\{\alpha_i\}$ and solving for $w$. This gives that
\begin{align}\label{yinalpha}
w & = \frac{\{\alpha_i\}}{1-u\{\alpha_i\}}.
\end{align}
It follows that
\begin{align}\label{nextalpha}
\alpha_{i+1} = \frac{w+v}{ku(w+v)+1}
 = \frac{w+v}{kv(\frac{wu}{v}+u)+1}
 = \frac{w+v}{[\alpha_i](\frac{wu}{v}+u)+1}.
\end{align}

Inserting \eqref{yinalpha} into the right-hand side of \eqref{nextalpha} and simplifying gives the desired result.
\end{proof}

While \eqref{sfuvform} does collapse down to \eqref{sfform} when $u=v=1$, something is lost in this generalization. Iterating \eqref{sfform} not only gives successive elements in a fixed row of the Calkin-Wilf tree; when it is applied to the rightmost element of a row, it returns the leftmost element of the next row. The same is not true of \eqref{sfuvform}.

It follows from Proposition~\ref{sfuv} that if we consider successive terms in each row of a $(u,v)$-Calkin-Wilf tree, the denominator-numerator formula (Property~\ref{dn}) holds only in the original Calkin-Wilf tree.

\begin{proposition}\label{denufor}
The denominator-numerator formula holds if and only if $u=v=1$.
\end{proposition}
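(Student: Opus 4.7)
The forward direction is a restatement of Property~\ref{dn}. For the converse, assume $(u,v)\neq(1,1)$. I plan to produce, for any $(u,v)$-orphan $z=a/b$ written in lowest terms, the two entries of the first row explicitly: these are $c_z^{(u,v)}(1,1)=a/(ua+b)$ and $c_z^{(u,v)}(1,2)=(a+bv)/b$. A one-line gcd argument will show that $\gcd(a,ua+b)=\gcd(a,b)=1$ and $\gcd(a+bv,b)=\gcd(a,b)=1$, so both of these fractions are already reduced. Hence the denominator-numerator identity on this particular pair collapses to the single linear condition
\[
a(u-1) \;=\; b(v-1),
\]
and my task reduces to exhibiting one orphan for which this fails.

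This I plan to do by case analysis. If exactly one of $u,v$ equals $1$, then the two sides of the condition have different signs (one is zero, the other is strictly positive), so every orphan $z$ produces a violation. If $u,v\ge 2$, I plan to take the orphan $z=1/u$, which does lie in the orphan range $[1/u,v]$ because $v\ge 2>1/u$; substituting $a=1$ and $b=u$ reduces the condition to $v=2-1/u$, impossible for integer $u\ge 2$. In each branch, the row-1 pair of the chosen $(u,v)$-tree contradicts the denominator-numerator formula, completing the converse. The only subtlety worth flagging is the reducibility check on $c_z^{(u,v)}(1,1)$ and $c_z^{(u,v)}(1,2)$---without verifying it, the surface expressions $ua+b$ and $a+bv$ cannot be identified with the literal numerator and denominator appearing in Property~\ref{dn}, and the argument would have a gap.
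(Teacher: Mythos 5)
Your argument is internally correct --- the gcd checks, the reduction of the row-one identity to $a(u-1)=b(v-1)$, and both case analyses all hold up --- but it takes a genuinely different route from the paper, and it proves a slightly weaker statement. The paper reuses the setup of its successor-formula proof: it writes an \emph{arbitrary} adjacent pair $\alpha_i,\alpha_{i+1}$ at \emph{any} depth in terms of their common ancestor $w=w'/w''$, derives the linear relation $vd_i+(1-uv)w'=n_{i+1}$ between the relevant denominator and numerator, and observes that the denominator-numerator identity would force $w=(v-1)/(u-1)$ to be the \emph{same} for every pair of successors --- a contradiction within any single tree, whatever its root. Your proof instead exhibits one failing pair in the top row of one carefully chosen tree. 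That is enough if ``the denominator-numerator formula holds'' is quantified over the whole $(u,v)$-forest, and your version is more elementary and self-contained (no appeal to the common-ancestor machinery). But the paper's surrounding text (``successive terms in each row of a $(u,v)$-Calkin-Wilf tree\ldots holds only in the original Calkin-Wilf tree'') suggests the per-tree reading, which your row-one test cannot establish in general: for $u=v=2$ and root $z=1$ (an orphan, since $1/2\le 1\le 2$), the row-one pair is $1/3$ and $3/1$, and the denominator $3$ \emph{does} equal the numerator $3$; the failure only appears at row two ($1/5$ followed by $7/3$). Your choice of root $z=1/u$ sidesteps this, but only for that one tree. If you want the per-tree conclusion, you need either the paper's ``$w$ would be constant'' argument or an induction pushing your computation to deeper rows.
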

\begin{proof}
Using the same notation in the proof of Proposition \ref{sfuv}, for a common ancestor $w$,
\[\alpha_i=\frac{w'+kv(uw'+w'')}{uw'+w''} \quad\text{ and }\quad \alpha_{i+1}=\frac{w'+vw''}{ku(w'+vw'')+w''},\]
where $w=w'/w''$ is in lowest terms. It is easy to see that the above representations of $\alpha_i$ and $\alpha_{i+1}$ are also in lowest terms.
So we can let $d_i=uw'+w''$ be the denominator of $\alpha_i$ and $n_{i+1}=w'+vw''$ be the numerator of $\alpha_{i+1}$. It quickly follows that
\begin{align}
vd_i+(1-uv)w'=n_{i+1}.\label{dnfbad}
\end{align}

$(\Leftarrow)$ If $u=v=1$, then $d_i=n_{i+1}$ follows from \eqref{dnfbad}.

$(\Rightarrow)$ If $d_i=n_{i+1}$, then it follows from \eqref{dnfbad} that \[
(uv-1)w' =(v-1)n_{i+1}
=(v-1)(w'+vw'').\]
Collecting like terms on either side of the equality shows that $(u-1)w'=(v-1)w''$. If $u=1$ and $v\neq 1$, then $w''=0$, a contradiction. A similar argument works for the case where $u\neq 1$ and $v=1$. If $u,v\neq 1$, then $w=w'/w''=(v-1)/(u-1)$. This would imply that $w$ is fixed for all pairs of successors, another contradiction. Therefore, $u=v=1$.
\end{proof}

We see from \eqref{dnfbad} that the relationship between successive denominators and numerators in a row of a $(u,v)$-Calkin-Wilf tree is significantly more complicated than in the statement of Property~\ref{dn}. In order to generalize the denominator-numerator formula, one would need to know more about the common ancestors of successive terms. At this time, no clear generalization of Property~\ref{dn} is evident.


\section{Symmetry Properties}

In this section, we study symmetry properties of $(u,v)$-Calkin-Wilf trees closely related to Property~\ref{sn}. As in the previous section, we are able to find some appropriate generalizations, in some sense, while showing that Property~\ref{sn} completely characterizes $\mathcal{T}(1)$. We begin with a lemma which will be used in the theorems that follow.

\begin{lemma}\label{replemma}
For every vertex in the $(u,v)$-Calkin-Wilf tree with root $z$ there are nonnegative integers $a, b, c,$ and $d$ with $ad-bc=1$ such that the vertex is represented as $\dfrac{az+b}{cz+d}.$
\end{lemma}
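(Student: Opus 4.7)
The plan is a straightforward induction on the depth $n$ of the vertex within $\mathcal{T}^{(u,v)}(z)$. For the base case $n=0$, the root $z$ is represented as $\frac{1\cdot z + 0}{0\cdot z + 1}$, which corresponds to $(a,b,c,d)=(1,0,0,1)$; this is a tuple of nonnegative integers with $ad-bc=1$.

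For the inductive step, suppose a vertex $w$ at depth $n$ can be written as $w = \frac{az+b}{cz+d}$ with $a,b,c,d \in \mathbb{N}_0$ and $ad-bc = 1$. I would then compute its two children directly from the generation rule in Figure~\ref{fig:newCWrule}. The left child is
\[
\frac{w}{uw+1} = \frac{az+b}{u(az+b)+(cz+d)} = \frac{az+b}{(ua+c)z+(ub+d)},
\]
so the new tuple is $(a,b,ua+c,ub+d)$, and one checks $a(ub+d)-b(ua+c) = ad-bc = 1$. The right child is
\[
w + v = \frac{az+b+v(cz+d)}{cz+d} = \frac{(a+vc)z+(b+vd)}{cz+d},
\]
yielding $(a+vc,b+vd,c,d)$, with $(a+vc)d - (b+vd)c = ad-bc = 1$. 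Nonnegativity is immediate from $u,v \ge 0$ and the inductive hypothesis.

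An alternative viewpoint, which matches the matrix framework already set up in the introduction, is to observe that every vertex of $\mathcal{T}^{(u,v)}(z)$ equals $M \cdot \begin{bmatrix} z \\ 1 \end{bmatrix}$ (read as a ratio of coordinates) for some $M$ in the free monoid generated by $L_u$ and $R_v$. Since $\det L_u = \det R_v = 1$ and both matrices have nonnegative integer entries, the same holds for any product $M = \begin{bmatrix} a & b \\ c & d\end{bmatrix}$, so the vertex is $\frac{az+b}{cz+d}$ with $ad-bc=1$.

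There is no real obstacle here: the lemma is essentially the assertion that $SL_2(\mathbb{N}_0)$ is closed under products of $L_u$ and $R_v$, translated into the coordinate form used throughout the paper. The only thing to be careful about is the bookkeeping in the inductive step, which I would present via the two displayed calculations above and the two determinant checks.
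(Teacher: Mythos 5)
Your proof is correct and follows exactly the route the paper takes: the paper's proof is a one-line appeal to induction on the row number, and your two displayed child computations with the determinant checks are precisely the details that induction requires. The matrix viewpoint you mention as an alternative is likewise already implicit in the paper's setup of $SL_2(\mathbb{N}_0)$ acting on $z$.
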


\begin{proof}
The statement follows from induction on the row number of the $(u,v)$-Calkin-Wilf tree $\mathcal{T}^{(u,v)}(z)$ (see Figure~\ref{fig:uvgraph}).
\end{proof}

Note that the integers $a$, $b$, $c$, and $d$ in Lemma~\ref{replemma} depend on $u$, $v$, and the position of the vertex in the tree. See~\eqref{lem1example} in Section~\ref{dcdf} for an example on how to compute $a$, $b$, $c$, and $d$ for $2147/620$ in $\mathcal{T}^{(2,3)}(5/2)$. Furthermore, Lemma~\ref{replemma} shows that every vertex in a $(u,v)$-Calkin-Wilf tree can be written as some linear fractional transformation of the root (see Figures~\ref{fig:uvgraph} and~\ref{uvvu}).

\begin{theorem}[General symmetry formula]\label{gensym}
For every nonnegative integer $n$ and $i=1,2,\ldots, 2^n$,
if $c_{z}^{(u,v)}(n,i)=\dfrac{a z +b}{c z + d}$ where $a,b,c,d$ are nonnegative integers, then
\begin{equation}c_{z}^{(u,v)}(n,2^n+1-i)=\dfrac{d z +\dfrac{cv}{u}}{\dfrac{bu}{v}z + a}.\label{gensymform}\end{equation}
\end{theorem}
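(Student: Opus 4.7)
The plan is to reinterpret the symmetry at the level of matrices acting on $\begin{bmatrix}z\\1\end{bmatrix}$. By Lemma~\ref{replemma}, each vertex $c_z^{(u,v)}(n,i)$ corresponds to a matrix $M_{n,i}=\begin{bmatrix}a&b\\c&d\end{bmatrix}\in SL_2(\mathbb{N}_0)$, and $M_{n,i}$ may be written as a length-$n$ product $X_n X_{n-1}\cdots X_1$ with $X_k\in\{L_u,R_v\}$, where $X_k$ records the $k$-th move on the path from the root to the $i$-th vertex of row $n$. The key combinatorial observation is that the ``mirror'' position $2^n+1-i$ corresponds to the word obtained from $X_n\cdots X_1$ by swapping every $L_u$ with $R_v$: indeed, if $i-1$ has the binary expansion $b_1 b_2\cdots b_n$ (most significant bit first), then $2^n-i$ has the bitwise complement as its expansion, so the two paths differ at every level.

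The heart of the proof is to introduce the map
\[
\Phi\colon \begin{bmatrix}a&b\\c&d\end{bmatrix} \longmapsto \begin{bmatrix}d & cv/u \\ bu/v & a\end{bmatrix}
\]
and verify two properties. First, $\Phi(L_u)=R_v$ and $\Phi(R_v)=L_u$, which is immediate from the definitions of $L_u$ and $R_v$. Second, $\Phi$ is multiplicative: $\Phi(MN)=\Phi(M)\Phi(N)$ for all $2\times 2$ matrices $M$ and $N$. This is a direct entry-by-entry computation, in which the factors of $u$ and $v$ cancel neatly in each of the four entries. Granted these two properties, a straightforward induction on $n$ gives
\[
\Phi(M_{n,i}) \;=\; \Phi(X_n)\Phi(X_{n-1})\cdots \Phi(X_1),
\]
which by the combinatorial observation equals $M_{n,2^n+1-i}$. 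Evaluating $\Phi(M_{n,i})\cdot \begin{bmatrix}z\\1\end{bmatrix}$ then yields precisely the right-hand side of \eqref{gensymform}.

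I do not anticipate a serious obstacle; the only non-trivial step is the multiplicativity check for $\Phi$, which is a short computation that must be carried out carefully. A conceptual alternative would be to observe that $\Phi(M)=D J M J D^{-1}$ with $J=\begin{bmatrix}0&1\\1&0\end{bmatrix}$ and $D=\operatorname{diag}(\sqrt{v/u},\sqrt{u/v})$; since $J^2=I$, this form makes multiplicativity transparent, but at the cost of introducing irrational conjugating matrices, so I would favor the direct verification in the actual write-up.
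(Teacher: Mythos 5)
Your proof is correct, but it takes a genuinely different route from the paper, which proves Theorem~\ref{gensym} by induction on the row number: the base case $n=1$ is checked directly, and the inductive step writes each vertex of row $n+1$ as a left or right child, applies the symmetry hypothesis to the parent row, and then applies $R_v$ (resp.\ $L_u$) to the mirrored parent. Your argument instead packages the symmetry into the single matrix map $\Phi$, whose two key properties I have checked: $\Phi(L_u)=R_v$, $\Phi(R_v)=L_u$, and multiplicativity (e.g.\ the top-right entry of $\Phi(M)\Phi(N)$ is $d\cdot gv/u+(cv/u)e=(ce+dg)v/u$, matching $\Phi(MN)$; the other three entries cancel the $u$'s and $v$'s the same way). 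Combined with the correct observation that position $2^n+1-i$ corresponds to the bitwise-complemented path, hence to the letter-swapped word in the same order, this yields $\Phi(M_{n,i})=M_{n,2^n+1-i}$ and the formula follows by evaluating at $\begin{bmatrix}z\\1\end{bmatrix}$. It is worth noting that your method is essentially the technique the paper itself uses later, in the second proof of Theorem~\ref{n2}, where conjugation by $\sigma(x)=x^{-1}$ swaps $L_u$ and $R_v$ at the level of maps; your $\Phi=DJ(\cdot)JD^{-1}$ is the matrix-level analogue adapted to a single $(u,v)$-tree. What your approach buys is a non-inductive, conceptual explanation of why the mirrored entry has exactly the form $\bigl(dz+\tfrac{cv}{u}\bigr)/\bigl(\tfrac{bu}{v}z+a\bigr)$, at the cost of having to state and verify the word/position correspondence explicitly; the paper's induction is more elementary and self-contained but must treat the left-child and right-child cases separately. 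One small point to make explicit in a write-up: the entries $cv/u$ and $bu/v$ of $\Phi(M)$ need not individually be integers, so you should note that $\Phi$ is multiplicative on matrices with rational entries and that integrality of $\Phi(M_{n,i})$ is a consequence, not a hypothesis, of the identity $\Phi(M_{n,i})=M_{n,2^n+1-i}$.
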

\begin{proof}
The proof is by induction on the row number $n$. Since $c_{z}^{(u,v)}(1,1)=\dfrac{z}{uz+1}$, we have that
$c_{z}^{(u,v)}(1,1) =\dfrac{a z +b}{c z + d}$ with $a =1$, $b = 0$, $c =u,$ and $d = 1$, and so
\[\dfrac{d z +\dfrac{cv}{u}}{\dfrac{bu}{v}z + a} = z+v=c_{z}^{(u,v)}(1,2).\]
On the other hand, starting from $c_{z}^{(u,v)}(1,2)=z+v$, we get that $c_{z}^{(u,v)}(1,2)=\dfrac{a z +b}{c z + d}$
with $a =1$, $b = v$, $c =0,$ and $d = 1$. Hence
\[\dfrac{d z +\dfrac{cv}{u}}{\dfrac{bu}{v}z + a} = \dfrac{z}{uz+1} = c_{z}^{(u,v)}(1,1).\]
This shows that the statement is true when $n=1$. Suppose that the theorem is true for some row $n\geq 1$. An element in the row $n+1$
is either of the form $c_{z}^{(u,v)}(n+1,2i-1)$ or $c_{z}^{(u,v)}(n+1,2i)$ for some integer $i$, $1\le i\le 2^{n}$.  If $c_{z}^{(u,v)}(n+1,2i-1)=\dfrac{a z+b}{c z + d}$ (we know that such a representation exists by Lemma~\ref{replemma}) then it is the left child of
\[c_{z}^{(u,v)}(n,i) = \dfrac{a z +b}{(c-ua)z+(d-ub)}.\]
Thus, by using the symmetry on row $n$, we obtain
\begin{eqnarray*}
&& c_{z}^{(u,v)}(n+1,2^{n+1}+2-2i) = R_v\left(c_{z}^{(u,v)}(n,2^n+1-i)\right) \\
&=& R_v\left(\dfrac{(d - u b)z + \dfrac{(c-ua)v}{u}}{\dfrac{bu}{v}z + a}\right)
= \dfrac{d z +\dfrac{cv}{u}}{\dfrac{bu}{v}z + a}.
\end{eqnarray*}
Similarly, if $c_{z}^{(u,v)}(n+1,2i)=\dfrac{a z +b}{c z + d}$ then it is the right child of
\[c_{z}^{(u,v)}(n,i) = \dfrac{(a-c v) z +(b-vd)}{c z+d}. \]
Hence
\begin{eqnarray*}
&&c_{z}^{(u,v)}(n+1,2^{n+1}+1-2i) = L_u\left(c_{z}^{(u,v)}(n,2^n+1-i)\right) \\
&=& L_u\left(\dfrac{d z + \dfrac{cv}{u}}{\dfrac{(b-vd)u}{v}z + (a-c v)}\right)
= \dfrac{d z +\dfrac{cv}{u}}{\dfrac{bu}{v}z + a}.
\end{eqnarray*}
\end{proof}

As a consequence, we obtain necessary and sufficient conditions for the symmetry formula (Property~\ref{sn}) to hold in a $(u,v)$-Calkin-Wilf tree.

\begin{corollary}[Symmetry formula]\label{symfor}
The symmetry formula, \begin{equation}c_{z}^{(u,v)}(n,i) \cdot c_{z}^{(u,v)}(n,2^n+1-i)=1,\label{eq1}\end{equation}  holds if and only if  $u=v$ and $z=1$.
\end{corollary}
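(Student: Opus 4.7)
The plan is to prove the two directions separately, using the general symmetry formula from Theorem~\ref{gensym} for the sufficiency and explicit computations at the leftmost and rightmost vertices of small rows for the necessity.

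For the ($\Leftarrow$) direction, I would apply Theorem~\ref{gensym} directly. If $u=v$, the ratios $v/u$ and $u/v$ in \eqref{gensymform} both equal $1$, so whenever $c_z^{(u,v)}(n,i) = (az+b)/(cz+d)$ we get $c_z^{(u,v)}(n,2^n+1-i) = (dz+c)/(bz+a)$. Setting $z=1$ then gives a product of $\frac{(a+b)(c+d)}{(c+d)(a+b)} = 1$, which is \eqref{eq1}.

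For the ($\Rightarrow$) direction, the idea is that the symmetry formula is a very strong constraint, so it suffices to test it on just two pairs of easily computable vertices. Using the known expressions from the start of the section, the leftmost and rightmost vertices of row $n$ are $c_z^{(u,v)}(n,1) = z/(nuz+1)$ and $c_z^{(u,v)}(n,2^n) = z+nv$. Applying \eqref{eq1} to the case $n=1$, $i=1$ yields
\begin{equation*}
\frac{z(z+v)}{uz+1} = 1, \qquad \text{i.e.} \qquad z^2 + (v-u)z - 1 = 0,
\end{equation*}
and applying it to the case $n=2$, $i=1$ yields
\begin{equation*}
\frac{z(z+2v)}{2uz+1} = 1, \qquad \text{i.e.} \qquad z^2 + 2(v-u)z - 1 = 0.
\end{equation*}
Subtracting these two equations gives $(v-u)z = 0$. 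Since $z$ is a positive rational, we conclude $u=v$, and substituting back yields $z^2=1$, so $z=1$.

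There is essentially no obstacle here; the main point is the choice of witnesses. I deliberately pick the leftmost element of two different rows because they have the simplest closed form, which keeps the resulting equations linear after the leading $z^2$ cancels. Any two distinct rows would work by the same subtraction trick, but $n=1$ and $n=2$ keep the arithmetic minimal.
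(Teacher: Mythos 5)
Your proof is correct, and the necessity direction takes a genuinely different route from the paper's. For $(\Leftarrow)$ you do essentially what the paper does: invoke Theorem~\ref{gensym} with $u=v$ and evaluate at $z=1$. For $(\Rightarrow)$, however, the paper works with a generic vertex $\frac{az+b}{cz+d}$, multiplies it against the expression from Theorem~\ref{gensym}, and reads off from the resulting quadratic
$\bigl(ad-\tfrac{bcu}{v}\bigr)z^2+\bigl[bd(1-\tfrac{u}{v})-ac(1-\tfrac{v}{u})\bigr]z+\bigl(\tfrac{bcv}{u}-ad\bigr)=0$
that the extreme coefficients must vanish, giving $u^2=v^2$ and then $z^2=1$; this step is cleanest if one regards $z$ as an indeterminate or lets the vertex vary, since for a single fixed rational $z$ and a single quadruple $(a,b,c,d)$ a vanishing quadratic does not by itself force its coefficients to vanish. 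Your argument sidesteps that subtlety entirely: you exploit the universal quantification over $n$ by testing the leftmost--rightmost pairs in rows $1$ and $2$, obtaining $z^2+(v-u)z-1=0$ and $z^2+2(v-u)z-1=0$, and the subtraction kills the quadratic term so everything stays linear and numerical. What you give up is the structural information the paper's computation provides (the exact algebraic obstruction $v^2=u^2$, $(ad-bc)(z^2-1)=0$ at an arbitrary vertex); what you gain is a shorter, fully elementary necessity proof that needs only the closed forms $c_z^{(u,v)}(n,1)=z/(nuz+1)$ and $c_z^{(u,v)}(n,2^n)=z+nv$ and not Theorem~\ref{gensym} at all. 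Both are valid; yours is arguably the more airtight reading of the statement as a claim about a fixed numerical root $z$.
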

\begin{proof}
Suppose, using Lemma~\ref{replemma}, that $c_{z}^{(u,v)}(n,i)=\dfrac{a z +b}{c z + d}$ where $a,b,c,d$ are nonnegative integers and $ad-bc=1$.  By Theorem~\ref{gensym}, we obtain that \eqref{eq1} is equivalent to
\[
\left(\dfrac{a z +b}{c z + d} \right)\cdot \left(\dfrac{d z +\dfrac{cv}{u}}{\dfrac{bu}{v}z + a}\right)= 1,\]
or
\begin{equation}\label{multipl}\left(ad- \dfrac{bcu}{v} \right)z^2+\left[bd\left(1-\frac{u}{v}\right)-ac\left(1-\dfrac{v}{u}\right)\right]z+\left(\dfrac{bcv}{u}-ad\right)=0
\end{equation}
It follows that $ad- \dfrac{bcu}{v}=0$ and $\dfrac{bcv}{u}-ad=0$, from which we get
\[\dfrac{v}{u}=\dfrac{bc}{ad} \qquad \text{and}\qquad \dfrac{v}{u}=\dfrac{ad}{bc},\]
thus $v^2=u^2$.  Since $u, v>0$, this implies that $u=v$. By substituting $u=v$ into (\ref{multipl}), we obtain that
$(ad-bc)(z^2-1)=0$.
Since $ad-bc=1$ and $z>0$, we conclude that $z=1$.
\end{proof}

We remark that Corollary~\ref{symfor} can be also proved using induction on the row number.
The result explains why the symmetry formula does not hold in $\mathcal T^{\;(2,3)}(5/2)$ (see Figure~\ref{fig:u2v3}), as we had observed earlier.

\begin{corollary}[Skew symmetry]\label{skewsym}
Using the same hypothesis as Theorem~\ref{gensym}, it follows that
\begin{equation*}c_{z}^{(u,v)}(n,i) \cdot c_{\frac{v}{u}z^{-1}}^{(u,v)}(n,2^n+1-i)=\frac{v}{u}.\end{equation*}
\end{corollary}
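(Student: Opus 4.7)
The plan is to read off $c_{z'}^{(u,v)}(n, 2^n+1-i)$ directly from Theorem~\ref{gensym} applied to the tree with root $z' = \frac{v}{u}z^{-1}$, then multiply by the given expression for $c_z^{(u,v)}(n,i)$ and simplify. The only conceptual point required is that the integers $a, b, c, d$ from Lemma~\ref{replemma} representing the vertex at a fixed position $(n,i)$ depend only on $u$, $v$, and the path from the root to that position, not on the value of the root itself. This is because the vertex at position $(n,i)$ is obtained by applying the \emph{same} product of matrices $L_u$ and $R_v$ to the root vector; changing the root only substitutes a different value into the resulting linear fractional expression while leaving $(a,b,c,d)$ unchanged.

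Concretely, write $c_z^{(u,v)}(n,i) = \frac{az+b}{cz+d}$ as in Lemma~\ref{replemma}. Applying Theorem~\ref{gensym} to $\mathcal{T}^{(u,v)}(z')$ with the same coefficients $(a,b,c,d)$ yields
$$c_{z'}^{(u,v)}(n, 2^n+1-i) = \frac{d z' + \tfrac{cv}{u}}{\tfrac{bu}{v} z' + a}.$$
The next step is a short algebraic simplification: substituting $z' = \tfrac{v}{uz}$ and clearing the nested fractions reduces the numerator to $\tfrac{v(cz+d)}{uz}$ and the denominator to $\tfrac{az+b}{z}$, so that
$$c_{z'}^{(u,v)}(n, 2^n+1-i) = \frac{v(cz+d)}{u(az+b)}.$$

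Multiplying by $c_z^{(u,v)}(n,i) = \frac{az+b}{cz+d}$ produces immediate cancellation and yields $\frac{v}{u}$, which is the identity claimed by the corollary.

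The argument presents essentially no obstacle; it is a brief algebraic verification built directly on Theorem~\ref{gensym}. The one subtle point worth emphasizing in the writeup is the root-independence of the coefficients $a,b,c,d$, because this is precisely what justifies re-invoking the theorem for the new root $z' = \tfrac{v}{u}z^{-1}$ with the very same quadruple $(a,b,c,d)$, rather than redoing the induction from scratch for a different tree.
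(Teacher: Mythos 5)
Your proposal is correct and follows essentially the same route as the paper: both write $c_z^{(u,v)}(n,i)=\frac{az+b}{cz+d}$ via Lemma~\ref{replemma}, substitute $z\mapsto \frac{v}{uz}$ into the formula of Theorem~\ref{gensym}, and simplify to $\frac{v}{u}\cdot\frac{cz+d}{az+b}$ before multiplying. Your explicit remark that the coefficients $a,b,c,d$ depend only on the position $(n,i)$ and not on the root is a worthwhile clarification that the paper leaves implicit, but it does not change the argument.
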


\begin{proof}
Suppose, using Lemma~\ref{replemma}, that $c_{z}^{(u,v)}(n,i)=\dfrac{a z +b}{c z + d}$ where $a,b,c,d$ are nonnegative integers. Replacing $z$ by $\frac{v}{uz}$ in \eqref{gensymform} yields that
\begin{equation*}
c_{\frac{v}{u}z^{-1}}^{(u,v)}(n,2^n+1-i) =\dfrac{d \dfrac{v}{uz} +\dfrac{cv}{u}}{\dfrac{bu}{v}\dfrac{v}{uz} + a}
=\dfrac{\dfrac{v}{u}\big(d+cz\big)}{b + az}
=\dfrac{v}{u}\cdot\dfrac{cz+d}{az+b},
\end{equation*}
which is equivalent to the desired result.
\end{proof}

Corollary~\ref{symfor} shows that the symmetry formula does not hold for $(u,v)$-Calkin-Wilf trees in general. However, Corollary~\ref{skewsym} (above) and Theorem~\ref{n2} (below) show that other symmetry formulas do hold when comparing either pairs of $(u,v)$-Calkin-Wilf trees or $(u,v)$- and $(v,u)$-Calkin-Wilf trees, respectively. For examples, see Table~\ref{table1}.

\begin{table}[ht!]
\begin{center}
\begin{tabular}{|c|cccc|}
\hline
Row 2 of $\mathcal{T}^{(2,3)}(5/2)$ & $5/22$ & $41/12$ & $11/24$ & $17/2$\\
\hline
Row 2 of $\mathcal{T}^{(2,3)}(3/5)$ & $3/17$ & $36/11$ & $18/41$ & $33/5$\\
\hline
Row 2 of $\mathcal{T}^{(3,2)}(2/5)$ & $2/17$ & $24/11$ & $12/41$ & $22/5$\\
\hline
\end{tabular}
\caption{Examples of Corollary~\ref{skewsym} and Theorem~\ref{n2}}\label{table1}
\end{center}
\end{table}

\begin{theorem} [Nathanson's symmetry,~\cite{N2}] \label{n2}
Let $z$ be a variable, and let $u$ and $v$ be positive integers.
For all nonnegative integers $n$ and $i=1, 2, \ldots, 2^n$,
\[c_{z}^{(u,v)}(n,i)\cdot c_{z^{-1}}^{(v,u)}(n, 2^n + 1 - i) = 1.\]
\end{theorem}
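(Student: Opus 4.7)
The main idea is that the involution $\phi(w) = w^{-1}$ intertwines the generation rules of $\mathcal{T}^{(u,v)}$ with those of $\mathcal{T}^{(v,u)}$, swapping left and right children. Indeed, $\phi\bigl(w/(uw+1)\bigr) = u + w^{-1}$, which is the right-child operation (with shift $u$) of $\mathcal{T}^{(v,u)}$ applied to $\phi(w)$, while $\phi(w+v) = w^{-1}/(v w^{-1}+1)$, which is the left-child operation (with parameter $v$) of $\mathcal{T}^{(v,u)}$ applied to $\phi(w)$. Thus the reciprocal map sends $\mathcal{T}^{(u,v)}(z)$ bijectively onto the left-right mirror of $\mathcal{T}^{(v,u)}(z^{-1})$, and the theorem is exactly the statement that this bijection identifies positions $i$ and $2^n+1-i$ in row $n$.

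My plan is to promote this observation into a formal proof by induction on $n$. The base case $n=0$ is immediate since $z\cdot z^{-1}=1$. For the inductive step, I would distinguish the parity of the index $j$ in row $n+1$: if $j=2k-1$ then $c_{z}^{(u,v)}(n+1,j)$ is the left child of $c_{z}^{(u,v)}(n,k)$, and an easy index computation gives $2^{n+1}+1-j = 2(2^n+1-k)$, so the mirrored vertex $c_{z^{-1}}^{(v,u)}(n+1, 2^{n+1}+1-j)$ is the right child of $c_{z^{-1}}^{(v,u)}(n, 2^n+1-k)$; if $j=2k$ the roles reverse. Writing $\alpha$ and $\beta$ for the paired row-$n$ vertices, the inductive hypothesis supplies $\alpha\beta = 1$, and each case reduces to a one-line identity, for example
\[
\frac{\alpha}{u\alpha+1}\cdot(\beta+u) \;=\; \frac{\alpha\beta + u\alpha}{u\alpha+1} \;=\; \frac{1+u\alpha}{u\alpha+1}\;=\;1,
\]
with an analogous computation $(\alpha+v)\cdot\beta/(v\beta+1)=1$ handling the other case.

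The only real care needed is the index bookkeeping, namely checking that a left child in $\mathcal{T}^{(u,v)}(z)$ really does pair with a right child in $\mathcal{T}^{(v,u)}(z^{-1})$ and vice versa, but this is exactly the elementary parity calculation above. An alternative route would apply Theorem~\ref{gensym} to both trees (with the roles of $u$ and $v$ swapped in the second, and $z$ replaced by $z^{-1}$) and compare the resulting linear-fractional representations, but the direct induction is shorter and makes the left-right swap transparent, so that is the route I would write up.
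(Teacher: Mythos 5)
Your proof is correct, but it takes a genuinely different route from the one the paper writes up. Your key observation---that $w\mapsto w^{-1}$ intertwines the left-child rule of $\mathcal{T}^{(u,v)}$ with the right-child rule of $\mathcal{T}^{(v,u)}$ and vice versa---is precisely the paper's Lemma~\ref{composition} ($\sigma\circ L_u\circ\sigma=R_u$ and $\sigma\circ R_v\circ\sigma=L_v$ for $\sigma(x)=x^{-1}$). The difference lies in how that observation is converted into the statement about positions $i$ and $2^n+1-i$. You run an induction on the row number, pairing the left child at odd index $2k-1$ with the right child at index $2(2^n+1-k)$ and closing each case with the identity $\frac{\alpha}{u\alpha+1}\cdot(\beta+u)=1$ when $\alpha\beta=1$; both your index computation and the algebra check out, and this is essentially the induction argument the paper attributes to Nathanson's original treatment in~\cite{N2}. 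The paper's own (second) proof instead encodes $c_z^{(u,v)}(n,i)$ as $w(L_u,R_v)(z)$ for the $i^{\text{th}}$ word $w$ of length $n$ in reverse lexicographic order and conjugates the whole word by $\sigma$ at once, obtaining $\sigma\circ w(L_u,R_v)=w(R_u,L_v)\circ\sigma$; the index correspondence is then absorbed into the combinatorial fact that swapping the two letters in a word mirrors its position, which is the global form of your parity bookkeeping. Your version is more elementary and self-contained at the cost of a short case analysis; the paper's version is shorter, dispenses with the product identity entirely, and makes the structural reason (a substitution of generators implemented by conjugation with $\sigma$) transparent. A third option you mention in passing---deriving the result from Theorem~\ref{gensym}---is explicitly noted in the paper to work only when $u=v$, so it is good that you did not pursue it for the general case.
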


If $u=v\geq 1$, then Theorem~\ref{n2} gives one of the directions of Corollary~\ref{symfor}. If $u = v = 1$, then this is the familiar symmetry of the Calkin-Wilf tree.

Nathanson's symmetry was proved in~\cite{N2} using induction on the row number. We conclude this section with two alternative proofs of Theorem~\ref{n2}. The first one is a consequence of Theorem~\ref{gensym}, and only holds when $u=v$.

\begin{proof}[First Proof of Theorem~\ref{n2} when $u=v$]
By Lemma~\ref{replemma}, let $c_z^{(u,u)}(n,i) = \dfrac{a z +b}{c z + d}$ for some nonnegative integers $a, b, c,$ and $d$. By Theorem~\ref{gensym},
we have
\[c_z^{(u,u)}(n,2^n+1-i)=\dfrac{d z +c}{bz + a}
\Longrightarrow c_{z^{-1}}^{(u,u)}(n,2^n+1-i)=\dfrac{d z^{-1} +c}{bz^{-1} + a} = \dfrac{cz+d}{az+b},\]
which is the reciprocal of $c_z^{(u,u)}(n,i)$.
\end{proof}

The identity presented in Theorem~\ref{gensym} only holds in a $(u,v)$-Calkin-Wilf tree where $u$ and $v$ are fixed.
Therefore we cannot use it to derive Nathanson's symmetry in the case $u\ne v$.  In order to show the desired relationship
between $(u,v)$- and $(v,u)$-Calkin-Wilf trees, we will use a lemma that shows the following:

\begin{lemma}\label{composition}
Let $\sigma : \mathbb{Q^*}\to \mathbb{Q^*}$ be defined by $\sigma(x)=x^{-1}$. Then
\begin{itemize}
\item[(a)] $\sigma \circ L_u \circ \sigma = R_u$
\item[(b)] $\sigma \circ R_u \circ \sigma = L_u$
\end{itemize}
\end{lemma}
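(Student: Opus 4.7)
The plan is to compute both sides directly, using the linear fractional transformation (LFT) interpretation of the matrices $L_u$ and $R_u$ acting on rationals. First, I would translate the matrix action on a column vector $\begin{bmatrix} a \\ b\end{bmatrix}$ representing $x=a/b$ into an LFT: from \eqref{l1} (with $u$ in place of 1) we get $L_u(x) = x/(ux+1)$, and from \eqref{r1} (with $u$ in place of 1) we get $R_u(x) = x+u$. This reduction is the only conceptual step; everything else is a one-line computation.

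For part (a), I would substitute $\sigma(x)=1/x$ into $L_u$ and simplify:
\[
L_u(\sigma(x)) \;=\; \frac{1/x}{u/x + 1} \;=\; \frac{1}{u+x},
\]
and then apply $\sigma$ once more to obtain $\sigma(L_u(\sigma(x))) = x+u = R_u(x)$. For part (b), the symmetric calculation is
\[
R_u(\sigma(x)) \;=\; \frac{1}{x}+u \;=\; \frac{1+ux}{x},
\]
and applying $\sigma$ gives $\sigma(R_u(\sigma(x))) = \dfrac{x}{1+ux} = L_u(x)$.

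There is no real obstacle, since each identity is a direct calculation once the LFT description of $L_u$ and $R_u$ is in place; the main point worth emphasizing in the write-up is simply the passage from the matrix action on $\begin{bmatrix} a\\ b\end{bmatrix}$ to the LFT on $x=a/b$, after which (a) and (b) follow by a single substitution and simplification apiece. Alternatively, one could observe that $\sigma$ corresponds to the matrix $J=\begin{bmatrix}0 & 1\\ 1 & 0\end{bmatrix}$ acting on column vectors (swapping the entries), and verify the matrix identities $J L_u J = R_u$ and $J R_u J = L_u$ directly; this is equivalent but slightly less direct than working on the level of rational numbers.
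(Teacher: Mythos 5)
Your proposal is correct and follows essentially the same route as the paper: both prove part (a) by the direct substitution $\sigma(x)=1/x$ into $L_u(x)=x/(ux+1)$ and simplification. The only cosmetic difference is in part (b), where the paper deduces it from (a) using $\sigma^2=\mathrm{id}$ (conjugating both sides by $\sigma$) while you repeat the direct computation; both are one-line arguments.
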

\begin{proof}
Part (a) of the lemma follows from the following straightforward computation:
    \begin{equation*}
    (\sigma \circ L_u \circ \sigma)(x) = \sigma \left(\dfrac{x^{-1}}{ux^{-1}+1}\right)
     = \sigma\left(\dfrac{1}{u+x}\right)
     =x+u
     = R_u(x).
    \end{equation*}
Part (b) follows from (a) since $\sigma^2=id$
\end{proof}

\begin{figure}[ht!]
\begin{center}
\begin{tikzpicture}[sibling distance=25pt]
\tikzset{level distance=45pt}
\Tree[.$z^{-1}=\frac{1}{z}$ \edge node[auto=right] {\footnotesize{$L_v$}}; [.$\dfrac{1}{z+v}$ \edge node[auto=right] {\footnotesize{$L^2_v$}}; [.$\dfrac{1}{z+2v}$ ] \edge node[auto=left] {\footnotesize{$R_uL_v$}};
[.$\dfrac{uz+uv+\!1}{z\!+\!v}$ ] ] \edge node[auto=left] {\footnotesize{$R_u$}}; [.$\dfrac{uz+1}{z}$ \edge node[auto=right] {\footnotesize{$L_vR_u$}}; [.$\dfrac{uz+1}{uvz\!+\!z\!+\!v}$ ] \edge node[auto=left] {\footnotesize{$R^2_u$}}; [.$\dfrac{2uz+1}{z}$ ] ] ]
\end{tikzpicture}
    \caption{The first three rows of $\mathcal{T}^{(v,u)}(z^{-1})$.}\label{uvvu}
 \end{center}
 \end{figure}
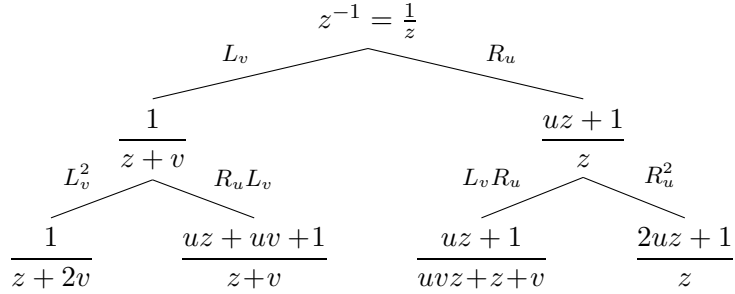

Comparing Figures~\ref{fig:uvgraph} and~\ref{uvvu}, we can see that if we view $c_{z}^{(u,v)}(n,i)$ as the result of a (unique) word $w(L_u,R_v)$ on two letters acting on $z$, then  $c_{z}^{(v,u)}(n,2^n+1-i)= w(R_u,L_v)(z)$. Specifically, the vertex $c_{z}^{(u,v)}(n,i)$ is $w(L_u,R_v)(z)$ where $w$ is
the $i^\text{th}$ word of length $n$ on the letters $R_u$ and $L_v$ in the reverse lexicographic order. We will use this approach to prove Nathanson's symmetry in its general form.

\begin{proof}[Second Proof of Theorem~\ref{n2}]
Let $c_z^{(u,v)}(n,i)=w(L_u,R_v)(z)$ where $w$ is the $i^\text{th}$ word of length $n$ on the letters $R_u$ and $L_v$ in the reverse lexicographic order.
For $\sigma(z) = z^{-1}$, it follows from Lemma~\ref{composition} that
\[\sigma\circ w(L_u,R_v)\circ\sigma = w(\sigma\circ L_u\circ\sigma,\sigma\circ R_v\circ\sigma) = w( R_u, L_v).\]
Therefore  $\sigma\circ w(L_u,R_v) = w( R_u, L_v)\circ\sigma$, which means that $\left(c_z^{(u,v)}(n,i)\right)^{-1}= c_{z^{-1}}^{(v,u)}(n,2^n+1-i)$.
\end{proof}


\section{The Descendant Conditions and the Depth Formula}\label{dcdf}

In a full binary tree, each vertex can be assigned a binary representation by enumerating the vertices in a breadth-first order. For example, the root of the tree is assigned the number 1; its left child is 2 and right child is 3, or $10_2$ and $11_2$ in their respective binary representations. In the next row, the vertices are 4, 5, 6, 7, or $100_2$, $101_2$, $110_2$, $111_2$, in binary representation form (See Figure~\ref{fig:bintree}).

\begin{figure}[ht!]
\begin{center}
\begin{tikzpicture}[sibling distance=3pt]
\tikzset{level distance=25pt}
5 \Tree [.$1_2$ [.$10_2$  [.$100_2$   $1000_2$   $1001_2$ ] [.$101_2$    $1010_2$   $1011_2$ ] ]
               [.$11_2$  [.$110_2$ $1100_2$   $1101_2$ ]
                    [.$111_2$   $1110_2$   $1111_2$
                          ] ] ]\end{tikzpicture}
\caption{Binary representation tree.}
   \label{fig:bintree}
\end{center}
\end{figure}
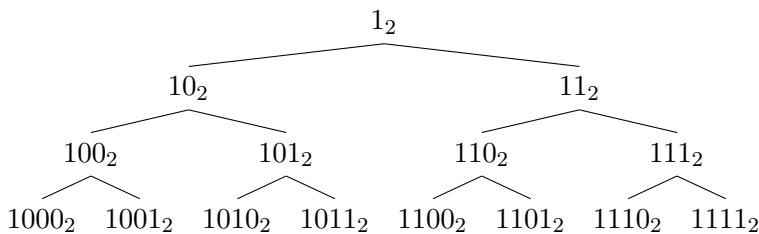

The parent-child relation is clearly demonstrated by the binary representation. Each left child is represented by the binary representation of its parent followed by a 0, while each right child is represented by the binary representation of its parent followed by a 1. Moreover, for each vertex, its binary representation encodes the binary representations of all of its ancestors back to the root.

We construct a 1-1 correspondence between the binary representations of the vertices in a full-binary tree and the words associated with each vertex in the Calkin-Wilf tree. Begin with the binary representation of a vertex. Truncate the leftmost 1 digit (all such representations begin with a 1), reverse the order of the string and map $0\mapsto L_1$ and $1\mapsto R_1$. For example, the vertex in position $1100_2$ corresponds to the word $L_1^2R_1$, which corresponds to the number $2/5=L_1^2R_1(1)$ in the Calkin-Wilf tree.

In the $(u,v)$-Calkin-Wilf tree, if we use the same binary representation as those in the original Calkin-Wilf tree, we can easily see that the left child is represented by the binary representation of its parent followed by $u$ consecutive 0s and the right child is represented by the binary representation of its parent followed by $v$ consecutive 1s. Let $B$ be the binary representation of the position of $w$ in the original Calkin-Wilf tree. Figure~\ref{fig:btree} shows the first three rows of $\mathcal{T}^{(2,3)}(w)$ in binary form.

\begin{figure}[ht!]
\begin{center}
\begin{tikzpicture}[sibling distance=3pt]
\tikzset{level distance=25pt}
5 \Tree [.$B$ [.$B00_2$  [.$B0000_2$  ] [.$B00111_2$  ] ]
               [.$B111_2$  [.$B11100_2$ ]
                    [.$B111111_2$
                          ] ] ]\end{tikzpicture}
\caption{Binary representation tree for $\mathcal{T}^{(2,3)}(w)$.}
   \label{fig:btree}
\end{center}
\end{figure}
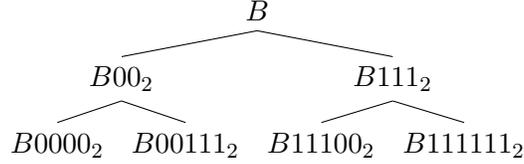

The $(u,v)$-ancestor-descendant relation is clearly demonstrated by the sequence of $u$ consecutive 0s or $v$ consecutive 1s. We give a few examples related to $(2,3)$-Calkin-Wilf trees:
\begin{itemize}
\item We have that $2/5 \mapsto 1100_2$, which is the left child of $11_2\mapsto 2$. Incidentally, 2 is an orphan root in the $(2,3)$-Calkin-Wilf forest.
\item The rational number corresponding to $110001110000_2$ in the Calkin-Wilf tree is a descendant of the orphan root $110_2$. One can trace from the right, a sequence of four 0s, three 1s, two 0s, and then it offers neither two consecutive 0s nor three consecutive 1s.
\item The rational number corresponding to the position $110001110001_2$ in the Calkin-Wilf tree is an orphan in the $(2,3)$-Calkin-Wilf forest.
\end{itemize}

The following result formalizes the above criterion for an element to be an orphan or a child of a $(u,v)$-Calkin-Wilf tree.

\begin{proposition}\label{binrep}
Let $w$ be a vertex of a $(u,v)$-Calkin-Wilf tree, and $B(w)$ be the binary representation of its corresponding position in the original Calkin-Wilf tree.
\begin{enumerate}
\item[(a)] Suppose that $B(w) =  B_1\underbrace{0\ldots 0}_{i}\phantom{}_2$, i.e., the binary representation $B(w)$ ends in exactly $i$ 0s. If $i\geq u$, then $w$ is the left child of the vertex whose position is  $B_1\underbrace{0\ldots 0}_{i-u}\phantom{}_2$. Otherwise, $w$ is an orphan.  \\
\item[(b)] Suppose that $B(w) =  B_0\underbrace{1\ldots 1}_{j}\phantom{}_2$, i.e., the binary representation $B(w)$ ends in exactly $j$ 1s. If $j\geq v$, then $w$ is the right child of the vertex whose position is  $B_0\underbrace{1\ldots 1}_{j-v}\phantom{}_2$. Otherwise, $w$ is an orphan.  \\
\end{enumerate}
\end{proposition}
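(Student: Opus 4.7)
My plan is to read everything off from the binary-string encoding developed just before the proposition, combined with Lemma~\ref{motlemma2}. Recall that $B(w)$ records the sequence of left/right turns from the root $1$ of $\mathcal{T}(1)$ down to $w$: stripping the leading $1$, each $0$ corresponds to applying $L_1$ and each $1$ to applying $R_1$. Consequently, appending a single $0$ (respectively $1$) to $B(w)$ corresponds to replacing $w$ by $L_1(w)$ (respectively $R_1(w)$). Since Lemma~\ref{motlemma2} gives $L_u=L_1^u$ and $R_v=R_1^v$, passing from a parent to its left child in the $(u,v)$-forest appends \emph{exactly} $u$ zeros to the binary representation, and passing to the right child appends \emph{exactly} $v$ ones. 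This single observation is the engine of the proof.

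For part (a), assume $B(w)=B_1\underbrace{0\ldots 0}_i\phantom{}_2$. If $i\ge u$, I would factor the trailing block as $B(w)=\bigl(B_1\underbrace{0\ldots 0}_{i-u}\phantom{}_2\bigr)\cdot\underbrace{0\ldots 0}_u$ and invoke the observation above to conclude that $w=L_u(w')$, where $w'$ is the rational with binary representation $B_1\underbrace{0\ldots 0}_{i-u}\phantom{}_2$; hence $w$ is the left child of $w'$ in the $(u,v)$-forest. If instead $i<u$, then $B(w)$ cannot be written as $B(w'')\cdot 0^u$ for any binary string $B(w'')$, so $w$ is not a left child in the $(u,v)$-forest; and since $B(w)$ ends in $0$ rather than $1$, it also cannot end in $v$ consecutive ones, so $w$ is not a right child. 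Because every non-orphan vertex of the $(u,v)$-forest is the left or right child of some other vertex, $w$ must be an orphan.

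Part (b) is an exact mirror of part (a) with the roles of $0/1$, $u/v$, and $L_u/R_v$ interchanged, so I would simply note the symmetry rather than repeat the argument. The only genuine point that needs care is the implicit injectivity of the correspondence $w\leftrightarrow B(w)$ that lets truncation invert $L_u$ or $R_v$ unambiguously, and this is immediate from the fact that $\mathcal{T}(1)$ lists each positive rational exactly once (Calkin and Wilf~\cite{CW}). I do not anticipate any deep obstacle here; once the identities $L_u=L_1^u$ and $R_v=R_1^v$ are available, the proof is essentially bookkeeping on trailing blocks of the binary representation.
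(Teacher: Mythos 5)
Your argument is correct and follows the same route the paper takes: the paper offers no formal proof of Proposition~\ref{binrep}, presenting it as a formalization of the preceding discussion, which rests precisely on your key observation that Lemma~\ref{motlemma2} ($L_u=L_1^u$, $R_v=R_1^v$) makes a left (resp.\ right) step in the $(u,v)$-tree append exactly $u$ zeros (resp.\ $v$ ones) to the binary position. Your added remarks on why $i<u$ forces $w$ to be an orphan (it cannot then be a right child either, since the string ends in a $0$) and on the injectivity of $w\leftrightarrow B(w)$ are exactly the bookkeeping the paper leaves implicit.
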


Another viewpoint for understanding the relationship between descendants in a $(u,v)$-Calkin-Wilf tree is via continued fractions. We begin the study of the relationship between continued fractions and $(u,v)$-Calkin-Wilf trees with the following useful lemma (see \cite{BBT} for the case $u=v=1$).

\begin{lemma}[Continued fraction relationship]\label{cf}
 Let $\frac{a}{b}$ be a positive rational number with continued fraction representation $\frac{a}{b}=[q_0,q_1,\dots,q_r]$. It follows that
 \begin{enumerate}
    \item[(a)] if $q_0=0$, then $\frac{a}{ua+b}=[0,u+q_1,\dots,q_r]$;
    \item[(b)] if $q_0\neq0$, then $\frac{a}{ua+b}=[0,u,q_0,q_1,\dots,q_r]$;
    \item[(c)] and $\frac{a+vb}{b}=[v+q_0,q_1,\dots,q_r]$.
 \end{enumerate}
\end{lemma}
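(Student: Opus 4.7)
The plan is to work directly from the recursive definition of a continued fraction, treating each of the three parts as a short algebraic identity that rearranges the expansion of $a/b$.

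For part (c) the key observation is that $\frac{a+vb}{b} = v + \frac{a}{b}$. Writing $\frac{a}{b} = q_0 + \tfrac{1}{[q_1,\dots,q_r]}$ and adding $v$ to the integer part gives
\[
\frac{a+vb}{b} = (v+q_0) + \frac{1}{[q_1,\dots,q_r]} = [v+q_0,q_1,\dots,q_r],
\]
which is the claim.

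For parts (a) and (b) the common first move is to invert: $\frac{a}{ua+b} = \frac{1}{u + b/a}$, so I only need to expand $u + \frac{b}{a}$ as a continued fraction and then precede it by a leading $0$. In part (a), $q_0 = 0$ forces $r \geq 1$ and $\frac{a}{b} = \frac{1}{[q_1,\dots,q_r]}$, hence $\frac{b}{a} = [q_1,q_2,\dots,q_r]$; adding $u$ to the leading entry gives $u + \frac{b}{a} = [u+q_1,q_2,\dots,q_r]$ and inverting yields $\frac{a}{ua+b} = [0, u+q_1, q_2, \dots, q_r]$. In part (b), $q_0 \geq 1$ means $\frac{b}{a} < 1$, so $\frac{b}{a} = [0, q_0, q_1, \dots, q_r]$; then $u + \frac{b}{a} = [u, q_0, q_1, \dots, q_r]$ and a final inversion gives $\frac{a}{ua+b} = [0, u, q_0, q_1, \dots, q_r]$.

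The only bookkeeping I need is to verify that each resulting expression is a \emph{valid} continued fraction in the sense of the footnote in Property~\ref{df}, i.e., that every entry past $q_0$ is a positive integer and the last entry is not $1$. This is immediate in all three cases: the only altered or inserted entries are at the front, and each newly introduced entry ($v+q_0$, $u+q_1$, or $u$) is a positive integer since $u,v \geq 1$. The terminal entry $q_r$ is left untouched, so the constraint $q_r \neq 1$ (when $r \geq 1$) is automatically preserved. I expect no substantive obstacle; the entire argument is essentially a reorganization of the definition, and the main care is just in splitting the inversion step into the two subcases based on whether $q_0$ vanishes.
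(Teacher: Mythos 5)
Your proposal is correct and follows essentially the same route as the paper: part (c) via $\frac{a+vb}{b}=\frac{a}{b}+v$, and parts (a) and (b) via the inversion $\frac{a}{ua+b}=\bigl(u+\frac{b}{a}\bigr)^{-1}$ followed by a case split on whether $q_0$ vanishes. Your extra bookkeeping about the validity of the resulting continued fraction representations is a harmless refinement of the paper's (terser) argument.
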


\begin{proof}
Let $$\frac{a}{b}=q_0+\cfrac{1}{q_1+\ddots+\cfrac{1}{q_{r-1}+\cfrac{1}{q_r}}}.$$ Note that $\frac{a}{ua+b}=\big(u+\frac{b}{a}\big)^{-1}$, so
\begin{align}\label{cfu}
\frac{a}{ua+b} & =\cfrac{1}{u+\cfrac{1}{q_0+\cfrac{1}{q_1+\ddots+\cfrac{1}{q_{r-1}+\cfrac{1}{q_r}}}}}.
\end{align}
By considering the cases when $q_0=0$ and $q_0\neq 0$, we get $(a)$ and $(b)$. The remaining case follows from the fact that $\frac{a+vb}{b}=\frac{a}{b}+v$.
\end{proof}

Lemma~\ref{cf} shows that the continued fraction representations of rationals appearing in a $(u,v)$-Calkin-Wilf tree follow a nice pattern. In fact, in the case where $u=v=1$, we can recover several of the properties of the original Calkin-Wilf tree listed in Section \ref{intro}.

The next theorem gives more insight into the properties of coefficients in the continued fraction representation of rational numbers appearing in a $(u,v)$-Calkin-Wilf tree.

\begin{theorem}[Descendant conditions]\label{cfterms}
Suppose that $w$ and $w'$ are positive rational numbers with continued fraction representations $w=[q_0,q_1,\dots,q_r]$ and $w'=[p_0,p_1,\dots,p_s]$. Then $w'$ is a descendant of $w$ in the $(u,v)$-Calkin-Wilf tree with root $w$ if and only if the following conditions all hold:
\begin{enumerate}
\item[(a)] $s\geq r$ and $2\mid (s-r)$;
\item[(b)]for $0\leq j\leq s-r-1$, $v\mid p_j$ when $j$ is even and $u\mid p_j$ when $j$ is odd;
\item[(c)] for $2\leq i\leq r$, $p_{s-r+i}=q_i$;
\item[(d)] and
\begin{enumerate}
\item[(i)] if $q_0\neq 0$, then $p_{s-r}\geq q_0$, $v\mid(p_{s-r}- q_0)$ and $p_{s-r+1}=q_1$;
\item[(ii)] otherwise, if $q_0 = 0$, then $v\mid p_{s-r}$, $p_{s-r+1}\geq q_1$, and $u\mid(p_{s-r+1}- q_1)$.
\end{enumerate}
\end{enumerate}
\end{theorem}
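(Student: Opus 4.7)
The approach is to prove both directions by induction, using Lemma~\ref{cf} as the essential tool. Conditions (a)--(d) describe the CF of a descendant of $w$ obtained by a word in $L_u$, $R_v$: the first $s-r$ entries form a prepended segment (whose alternating divisibility is forced by Lemma~\ref{cf}), the final entries $p_{s-r+2}, \ldots, p_s$ are unaffected copies of $q_2, \ldots, q_r$, and the two boundary entries $p_{s-r}$ and $p_{s-r+1}$ interpolate between the new and old portions, with behavior depending on whether $q_0 = 0$.

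For the forward direction, I would induct on the depth of $w'$ in $\mathcal{T}^{(u,v)}(w)$. The base case $w' = w$ makes (a) and (c) trivial, (b) vacuous, and (d) immediate. For the inductive step, write $w' = M(w'')$ with $w''$ a descendant of smaller depth assumed to satisfy the conditions. If $M = R_v$, only $p_0$ changes (to $p''_0 + v$), so one rechecks (b) at $j = 0$ and (d) at position $s - r = 0$ using divisibility by $v$. If $M = L_u$, Lemma~\ref{cf} splits on whether $p''_0 = 0$: either $p_1 = u + p''_1$ with indices unchanged, or $[0, u]$ is prepended and all indices shift up by 2 (so $s - r$ grows by 2, preserving parity). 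A routine index chase verifies each of (a)--(d) in each subcase.

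For the reverse direction, I would run a peeling algorithm: given $w'$ satisfying the conditions, I exhibit a parent $w''$ also satisfying them with strictly smaller lex-weight $(s, p_0, p_1)$, and continue until $w'' = w$. If $p_0 > 0$, the last move must have been $R_v$ (since $L_u$ always yields leading $0$); condition (b) (when $s > r$) or (d)(i) (when $s = r$) gives $p_0 \geq v$ so that $[p_0 - v, p_1, \ldots, p_s]$ is a legal CF. If $p_0 = 0$, the last move was $L_u$, whose inverse yields either $[0, p_1 - u, p_2, \ldots, p_s]$ when $p_1 > u$ or $[p_2, \ldots, p_s]$ when $p_1 = u$; condition (b) (or (d)(ii) when $s = r$ and $q_0 = 0$) forces $u \mid p_1$ so one of these cases applies. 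One then verifies the peeled CF still satisfies (a)--(d) with respect to $w$ and invokes the induction hypothesis.

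I expect the main obstacle to be the bookkeeping at the seam between the prepended region (indices $0$ through $s - r - 1$) and the preserved tail (indices $s - r + 2$ through $s$). When a peel shrinks $s - r$ from $2$ to $0$ by stripping a $[0, u]$ prefix, the conditions at positions $0$ and $1$ must transition from (b) to (d), and the split of (d) into the subcases $q_0 \neq 0$ and $q_0 = 0$ has to be carried consistently throughout. Keeping the standard-CF requirement (non-leading entries positive, final entry at least $2$ when $r \geq 1$) aligned across these transitions is where the case work concentrates.
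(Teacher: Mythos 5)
Your proposal matches the paper's proof in all essentials: the forward direction is the same induction on depth with the same case split (left versus right child, $q_0=0$ versus $q_0\neq 0$) driven by Lemma~\ref{cf}, and your reverse-direction peeling argument is just the step-by-step unwinding of the explicit word $R_v^{p_0/v}L_u^{p_1/u}\cdots R_v^{(p_{s-r}-q_0)/v}$ that the paper writes down in closed form as its converse. The argument is correct, and the seam bookkeeping you flag (the transition from condition (b) to condition (d) as $s-r$ drops to $0$) is precisely the case work the paper's proof also relies on but largely omits.
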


\begin{proof}
$(\Rightarrow)$ We prove the first direction by induction. Note that $(a)$ holds by Lemma \ref{cf}, so our main concern will involve the remaining conditions.

Let $A_n$ be the set of descendants of $w$ of depth $n$. Then $A_1$ consists of both children of $w$. If $w'$ is the left child of $w$ and $q_0=0$, then, by Lemma \ref{cf}, $w'$ has a continued fraction representation $w'=[0,u+q_1,\dots,q_r]$. In this case, $s=r$, so $(b)$ is vacuously true and $(c)$ immediately holds. (Note that $(c)$ is also vacuously true if $r=1$.) Since $s-r=0$, it follows that $p_{s-r}=p_0=q_0$, which implies that $v\mid (p_{s-r}-q_0)$. Also, it is clear that $p_{s-r+1}\geq q_1$ and $u\mid (p_{s-r+1}-q_1)$ since $p_{s-r+1}=u+q_1$. This shows that part $(ii)$ of condition $(d)$ holds. The two remaining cases, where $w'$ is a left child of $w$ with $q_0\neq 0$ and where $w'$ is a right child of $w$, can be handled in a similar way using Lemma \ref{cf}. This shows that the theorem holds for $A_1$.

Now suppose that the desired result holds for $A_k$ for some $k\geq 1$ and assume that $w'\in A_{k+1}$. Furthermore, assume that $w'$ is the left child of some $w''\in A_k$, where $w''$ has a continued fraction representation $w''=[d_0,d_1,\dots,d_t]$. By Lemma \ref{cf}, if $d_0=0$, then $s=t$ and $w'=[0,u+d_1,\dots,d_t]$. Since $p_k=d_k$ for $0\leq k\leq t$ with $k\neq 1$, then, with the exception of one coefficient, the result holds. For the case $k=1$, notice that if $t>r$, then $u\mid d_1$, so $u\mid(u+d_1)$. If $t=r$, then $u+d_1-q_1>d_1-q_1\geq 0$ and $u\mid (d_1-q_1)$, so $u\mid(u+d_1-q_1)$. This implies the desired result.

As was the case with $A_1$, there are two remaining cases to handle. The proofs of the statement when $d_0\neq 0$ and when $w'$ is the right child of some $w''\in A_k$ are both similar to the argument presented above. We omit the details.

$(\Leftarrow)$ Using Lemma \ref{cf}, a simple computation shows that when $q_0\neq 0$, \begin{align}
w' & =R_v^{p_0/v} L_u^{p_1/u}\cdots R_v^{p_{s-r-2}/v} L_u^{p_{s-r-1}/u} R_v^{(p_{s-r}-q_0)/v}(w).\label{coeff}
\end{align} A similar formula gives the desired result when $q_0=0$.
\end{proof}

\begin{corollary}[Depth formula]\label{cfdepth}
Using the same hypothesis as Theorem \ref{cfterms}, if $n$ is the depth of $w'$, then
\begin{align}
n & = \frac{1}{v}\Bigg(\sum_{\substack{0\leq j\leq s-r-1\\j\text{ even}}}p_j + \sum_{\substack{0\leq i\leq r\\i \text{ even}}}(p_{s-r+i} - q_i)\Bigg) \label{depthform} \\
&\qquad + \frac{1}{u}\Bigg(\sum_{\substack{0\leq j\leq s-r-1\\j \text{ odd}}}p_j + \sum_{\substack{0\leq i\leq r\\i \text{ odd}}}(p_{s-r+i} - q_i)\Bigg).
\nonumber
\end{align}
\end{corollary}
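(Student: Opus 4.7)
The plan is to read the depth $n$ directly off the explicit word factorization of $w'$ acting on $w$ that is produced in the backward direction of the proof of Theorem~\ref{cfterms}: since the depth of $w'$ in $\mathcal{T}^{(u,v)}(w)$ equals the total number of $L_u$ and $R_v$ letters in any such word, it suffices to sum the exponents of that factorization and then use conditions~(c) and~(d) of Theorem~\ref{cfterms} to absorb the vanishing ``tail'' differences $(p_{s-r+i}-q_i)$ into the formula at zero cost.

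First I would treat the case $q_0\neq 0$. Equation~\eqref{coeff} provides
\[
w' = R_v^{p_0/v}\, L_u^{p_1/u}\, R_v^{p_2/v}\cdots L_u^{p_{s-r-1}/u}\, R_v^{(p_{s-r}-q_0)/v}(w),
\]
in which $s-r$ is even by condition~(a). Summing exponents gives
\[
n = \frac{1}{v}\sum_{\substack{0\le j\le s-r-1\\ j\text{ even}}} p_j + \frac{p_{s-r}-q_0}{v} + \frac{1}{u}\sum_{\substack{0\le j\le s-r-1\\ j\text{ odd}}} p_j.
\]
Condition~(d)(i) gives $p_{s-r+1}-q_1=0$ and condition~(c) gives $p_{s-r+i}-q_i=0$ for $2\le i\le r$, so for each $i\in\{1,\ldots,r\}$ the corresponding tail term vanishes and may be adjoined to the $v$-sum (when $i$ is even) or to the $u$-sum (when $i$ is odd) at no cost. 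Rearranging yields~\eqref{depthform} exactly.

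For the case $q_0=0$, I would derive, by the same iterative use of Lemma~\ref{cf}, the companion factorization
\[
w' = R_v^{p_0/v}\, L_u^{p_1/u}\cdots L_u^{p_{s-r-1}/u}\, R_v^{p_{s-r}/v}\, L_u^{(p_{s-r+1}-q_1)/u}(w),
\]
which collapses (when $s=r$) to $R_v^{p_0/v}(w)$ or $L_u^{(p_1-q_1)/u}(w)$, recovering the two children of $w$. Summing its exponents, substituting $p_{s-r}-q_0=p_{s-r}$ (since $q_0=0$), and invoking condition~(c) to insert the zero tail contributions for $i\ge 2$, once more reproduces~\eqref{depthform}.

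The hardest part will be verifying the $q_0=0$ factorization cleanly, because each applied letter $L_u$ or $R_v$ either prepends two new partial quotients to the continued fraction or merges into an existing one depending on whether the current intermediate vertex has leading quotient zero. The most transparent route is induction on the depth that runs parallel to the forward induction in the proof of Theorem~\ref{cfterms}, at each step tracking precisely how Lemma~\ref{cf} updates the pair $(p_{s-r},p_{s-r+1})$ versus the prefix $(p_0,\ldots,p_{s-r-1})$. Once the factorization is in hand, the depth formula is merely the statement that the total letter count equals the listed exponent sum.
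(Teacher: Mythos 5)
Your proposal is correct and follows essentially the route the paper intends: the paper's proof is the single remark that the corollary ``follows from Theorem~\ref{cfterms} by induction,'' and your argument---reading the depth off the word factorization \eqref{coeff} (and its $q_0=0$ companion) as the total exponent sum, then using conditions (c) and (d) to absorb the vanishing tail terms---is just the telescoped form of that induction, consistent with how the paper itself uses \eqref{depthform} in its $2147/620$ example. You supply more detail than the paper does, and the details check out.
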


The proof of Corollary \ref{cfdepth} follows from Theorem \ref{cfterms} by induction. Note that the majority of the terms in the sum \eqref{depthform} are actually zero. In the case where $u=v=1$, Corollary \ref{cfdepth} recovers the formula from Property~\ref{df}.

From Lemma \ref{cf} and Theorem \ref{cfterms}, we can construct a recursive algorithm that determines the orphan ancestor of $w'$ in the $(u,v)$-Calkin-Wilf that contains it. The algorithm makes heavy use of the continued fraction representation of $w'$.

\begin{algorithm}[ht!]
\caption{$(u,v)$-Calkin-Wilf tree orphan ancestor}\label{euclid}
\begin{algorithmic}[1]
\Procedure{ancestor}{$[p_0,p_1,\dots,p_s],u,v$}
\If{$s=0$}
    \If{$p_0 \leq v$}
        \textbf{return} $[p_0]$
    \Else
        \textbf{ return} \Call{ancestor}{$[p_0-v],u,v$}
    \EndIf
\ElsIf{$s=1$}
    \If{$0< p_0 < v$}
        \textbf{return} $[p_0,p_1]$
    \ElsIf{$p_0 > v$}
        \State \textbf{return} \Call{ancestor}{$[p_0-v,p_1],u,v$}
    \ElsIf{$p_0 = 0$ \textbf{and} $p_1 \leq u$}
        \textbf{return} $[0,p_1]$
    \Else
        \textbf{ return} \Call{ancestor}{$[0,p_1-u],u,v$}
    \EndIf
\Else
    \If{$p_0 < v$}
        \textbf{return} $[p_0,p_1,\dots,p_s]$
    \ElsIf{$p_0\geq v$}
        \State \textbf{return} \Call{ancestor}{$[p_0-v,p_1,\dots,p_s],u,v$}
    \ElsIf{$p_0 = 0$ \textbf{and} $0 < p_1 < u$}
        \State \textbf{ return} \Call{ancestor}{$[0,p_1,\dots,p_s],u,v$}
    \ElsIf{$p_0 = 0$ \textbf{and} $p_1 > u$}
        \State \textbf{ return} \Call{ancestor}{$[0,p_1 - u,\dots,p_s],u,v$}
    \Else
        \textbf{ return} \Call{ancestor}{$[p_2,\dots,p_s],u,v$}
    \EndIf
\EndIf
\EndProcedure
\end{algorithmic}
\end{algorithm}

For example, let $u=2$ and $v=3$. The continued fraction representation of $2147/620$ is given by $[3,2,6,4,5,2]$. Using the above algorithm, we can compute the list of ancestors of $2147/620$ as: $287/620=[0,2,6,4,5,2]$, $287/46=[6,4,5,2]$, $149/46=[3,4,5,2]$, $11/46=[0,4,5,2]$, $11/24=[0,2,5,2]$, $11/2=[5,2]$, and $5/2=[2,2]$. Since $1/2\leq5/2\leq3$, then $5/2$ is the orphan ancestor of $2147/620$.

By \eqref{coeff}, we see that the coefficients of the continued fraction of $2147/620$ encode the path taken from the orphan $5/2$ to the descendant $2147/620$. This can be computed as follows. Consider the continued fraction representation $[3,2,6,4,5,2]$ as a row vector. Extend the continued fraction representation of $5/2$ to a row vector of the same length by adding zeros at the front, $[0,0,0,0,2,2]$. Take the difference between both vectors, $[3,2,6,4,3,0]$. Divide the even-indexed (note that the leading terms is indexed by 0) terms by 3 and the odd-indexed terms by 2, $[1,1,2,2,1,0]$. Corollary \ref{cfdepth} states that the sum of the terms in this vector gives the depth of $2147/620$. The terms also show that $2147/620 = R_vL_uR^2_vL^2_uR_v(5/2)$. In particular, since \begin{equation}R_vL_uR^2_vL^2_uR_v = \begin{bmatrix}187 & 606\\54 & 175\end{bmatrix},\label{lem1example}\end{equation} then $a=187$, $b=606$, $c=54$, and $d=175$ in Lemma~\ref{replemma} for this case.

When $u=v=1$, the above discussion shows that every positive rational number appears in the original Calkin-Wilf tree (see~\cite{CW}).

\section*{Acknowledgments}

Support for this project was provided by a PSC-CUNY Award, jointly funded by The Professional Staff Congress and The City University of New York (\#67111-00 45 to the second author, and \#67136-00 45 to the third author).


\end{document}